\newtheorem{thm}{Theorem}[section]
\newtheorem{prop}[thm]{Proposition}
\newtheorem{que}{Question}
\newtheorem{defn}[thm]{Definition}
\newtheorem{lem}[thm]{Lemma}
\newtheorem{conj}[thm]{Conjecture}
\newtheorem*{rk}{Remark}
\newcommand{\del}{\partial} 
\newcommand{\dbar}{\overline{\del}}
\newcommand{\ddb}{\sqrt{-1}\del\dbar}
\newcommand{\ddbar}{\del\dbar}
\newcommand{\ddt}{\frac{d}{d t}}
\newcommand{\fa}{\mathfrak{a}}
\newcommand{\cP}{\mathcal{P}}
\title{The deformed Hermitian-Yang-Mills equation in geometry and physics}
\author{Tristan C. Collins$^{*}$}
\address{Department of Mathematics\\
 Harvard University\\
 1 Oxford St.\\
 Cambridge\\
 MA 02138\\
 USA}
 \email{tcollins@math.harvard.edu}
\author{Dan Xie}
\email{dxie@cmsa.fas.harvard.edu}
\author{Shing-Tung Yau}
\email{yau@math.harvard.edu}
\dedicatory{To Nigel Hitchin, with admiration, on the occasion of his 70th birthday.}
\thanks{$^{*}$ supported in part by NSF grant DMS-1506652}
\date{}							
\begin{document}
\maketitle

\begin{abstract}
We provide an introduction to the mathematics and physics of the deformed Hermitian-Yang-Mills equation, a fully nonlinear geometric PDE on K\"ahler manifolds which plays an important role in mirror symmetry.  We discuss the physical origin of the equation, discuss some recent progress towards its solution.  In dimension $3$ we prove a new Chern number inequality and discuss the relationship with algebraic stability conditions.
\end{abstract}

\section{The deformed Hermitian-Yang-Mills equation and Mirror Symmetry}\label{sec: phys}

It was discovered a long time ago that there are five perturbatively well-defined 10 dimensional superstring theories: Type IIA, type IIB, type I, heterotic SO(32) and heterotic $E_8\times E_8$ string theory; see \cite{Beck} for an introduction into string theory. 
 To get a realistic particle physics model in four dimensions, one needs to study compactifications of string theory
on compact 6 (real) dimensional manifolds. It turns out that Calabi-Yau three manifolds \cite{CHSW} play a crucial role in studying supersymmetric compactifications, for which many computations are under control.
 By choosing different Calabi-Yau geometries and different 
10 dimensional string theories, one gets a huge number of string theory vacua in various dimensions. These theories were originally thought to be independent with no obvious relations between them.  
One of the main discoveries of the second string revolution in the mid-nineties was that these string vacua are, in fact, not independent at all, and most of them are related through various kinds of string dualities.  A fundamental example of this was the discovery of pairs of Calabi-Yau manifolds $X, \hat{X}$ for which IIA string theory on $X$ (resp. $\hat{X}$) is equivalent to IIB string theory on $\hat{X}$ (resp. $X$) \cite{GP}.  This duality came to be called mirror symmetry.  Mirror symmetry has generated a huge amount of interest among physicists and mathematicians, in part due to its surprisingly successful prediction of enumerative curve counts inside Calabi-Yau manifolds \cite{COGP}.   One basic feature of mirror symmetry is that it exchanges the complexified Kahler moduli space and complex structure moduli space of the mirror pairs. Mirror symmetry often maps one hard quantum problem to a simpler 
classical geometric problem, for example, the very complicated counting of curves on $X$ is reduced to simpler computation of period integrals on $\hat{X}$. 

A second major discovery in the second string revolution was the existence of various kinds of extended objects, besides the fundamental string, which are used to define string theory.  One of most important classes of these extended objects 
is the class of D-branes.  These new discoveries provided new insights into the understanding of mirror symmetry. Using T-duality and D-branes, Strominger, Yau and Zaslow  described a geometric picture of mirror symmetry which is now called the SYZ picture \cite{SYZ}. 

The first ingredient of the SYZ picture of mirror symmetry is the so-called T-duality symmetry of string theory \cite{Beck}.  T-duality relates different string theories compactified on circles.
Consider a string theory ${\mathcal T}$ compactified on a circle $S_A$ with radius $R$. T-duality predicts that it should be equivalent to a different string theory ${\mathcal T}^{'}$ 
compactified on a circle $S_B$ with radius ${1\over R}$. The typical example is type IIA string theory and type IIB string theory which, when compactified on a circle, are related by T-duality.

The second ingredient of the SYZ picture of mirror symmetry is D-branes. The name D-brane is derived from the fact that the world sheet string theory description of these objects has Dirichlet (``D") boundary conditions on the world volume of the brane. We often denote 
a D-brane as a D$p$ brane where $p$ denotes the number of spatial dimensions of the brane world volume, and the full space-time dimension of a D$p$ brane is $p+1$.  D-branes are extended objects carrying Ramond-Ramond (RR) charges.  Not all D-branes are physically realistic.  The realistic D-branes are minimizers of some energy functional, and are usually referred to as BPS. 
The BPS branes of type II string theory on a Calabi-Yau manifold $X$ have been classified into two kinds in the large volume/ large complex structure limit: one type of D-brane is a special Lagrangian submanifold of $X$ and the other type 
is a complex submanifold. A D-brane has a quantum moduli space which is related to the geometry it probes in an interesting way; for example, the quantum moduli space of D0 brane probing a Calabi-Yau manifold 
$X$ is nothing but $X$ itself. 

Let us consider how T-duality acts on D-branes;  T-duality maps a D-brane wrapping on circle $S_A$ to a D-brane sitting on a point of the dual circle $S_B$ (and vice versa).  Combining T-duality and the existence of D-branes, SYZ proposed the following geometric picture of mirror symmetry: consider a 3-dimensional Calabi-Yau manifold X which has a
$T_3$ fibration. If we compactify type IIB string theory on $X$ and apply T-duality to every fiber, we should get a type IIA string theory on the mirror manifold $\hat{X}$.  BPS D-branes of the type IIB string are required to be special Lagrangians, while BPS D-branes of the type IIA string are required to be complex submanifolds.  If we wrap a D3-brane on a T3 fiber, we get D0-brane on the mirror manifold after T-duality.  The quantum moduli space of original D3-brane should equal to the moduli space of D0-brane which is then equal to the mirror manifold $\hat{X}$.  Therefore $\hat{X}$ arises as the dual torus fibration and its geometry can be understood from the D-brane moduli space associated special Lagrangian torus fibre of $X$. 

\begin{conj}[Stominger-Yau-Zaslow, \cite{SYZ}] \label{conj: SYZ}
Let X and $\hat{X}$ be a mirror pair of CY manifolds.  Near the large volume/ large complex structure limits:
\begin{itemize}
\item $X$ and $\hat{X}$ admit dual special Lagrangian torus fibrations $\mu: X \rightarrow B$ and $\hat{\mu}: \hat{X} \rightarrow B$ over the same base $B$.
\item  There exists a fiberwise Fourier-Mukai transform which maps Lagrangian submanifolds of $X$ to coherent sheaves on $\hat{X}$.
\end{itemize}
\end{conj}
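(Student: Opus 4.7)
The plan is to work near the large volume/large complex structure limit, where a sequence of Ricci-flat K\"ahler metrics on $X$ (produced by Yau's theorem) should collapse to a real half-dimensional metric base $B$, following the picture developed by Gross--Wilson and Kontsevich--Soibelman. First I would construct the special Lagrangian torus fibration $\mu: X \to B$ by analyzing this collapse: on the complement of a codimension-$2$ discriminant locus $\Delta \subset B$, smooth fibers should be modeled on a semi-flat special Lagrangian torus. McLean's theorem then endows $B \setminus \Delta$ with two dual integral affine structures arising from the identifications of its tangent bundle with $H^{1}$ and $H^{n-1}$ of the fiber.

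To produce $\hat{X}$, I would dualize the smooth locus torus-by-torus, defining $\hat{\mu}^{sm}: \hat{X}^{sm} \to B \setminus \Delta$ whose fiber over $b$ is the dual torus $H^{1}(\mu^{-1}(b), \mathbb{R}) / H^{1}(\mu^{-1}(b), \mathbb{Z})$, equivalently the moduli of flat $U(1)$-connections on the fiber modulo gauge. The complex structure on $\hat{X}^{sm}$ comes from combining the affine structure on $B \setminus \Delta$ with the dual $S^{1}$-fiber directions, and a semi-flat holomorphic volume form can be produced explicitly. The candidate Fourier--Mukai transform is then the relative one with kernel the Poincar\'e line bundle on the fiber product $X^{sm} \times_{B \setminus \Delta} \hat{X}^{sm}$, sending a section Lagrangian equipped with a flat $U(1)$-bundle to its dual coherent sheaf.

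The main obstacle is extending everything across the discriminant $\Delta$. The semi-flat ansatz alone does not glue to a well-defined compact Calabi--Yau structure on $\hat{X}$: one must incorporate quantum corrections coming from holomorphic disks in $X$ with boundary on the smooth special Lagrangian fibers. Encoding these corrections systematically is the content of the Gross--Siebert program via tropical scattering diagrams on $B$, and of the Kontsevich--Soibelman non-Archimedean approach; reconciling either with an actual Ricci-flat metric on $\hat{X}$ is itself deep and is known only in very special cases such as K3 surfaces (Gross--Wilson). Verifying that the resulting Fourier--Mukai transform preserves BPS data, in particular mapping special Lagrangians in $X$ to solutions of the deformed Hermitian--Yang--Mills equation on $\hat{X}$, which is the central theme of the present paper, would be the final and most delicate step.
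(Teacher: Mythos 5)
The statement you are addressing is a \emph{conjecture}, and the paper does not prove it: Section~\ref{sec: phys} only gives the physical heuristic (T-duality on the $T^3$ fibers, matching of D-brane moduli spaces) that motivates it, and then works out the one situation where the picture can be made rigorous, namely the semi-flat limit following Leung--Yau--Zaslow and Hitchin. What you have written is likewise not a proof but an outline of the standard strategy for attacking SYZ, and every load-bearing step in that outline is itself an open problem, as you partly acknowledge. Concretely: (i) the existence of a special Lagrangian torus fibration on $X$ near the large complex structure limit is not known in general --- the collapsing behavior of the Ricci-flat metrics that you invoke is established only in special cases (K3 surfaces via Gross--Wilson, and certain higher-dimensional degenerations), and even where metric collapse is understood it does not by itself produce special Lagrangian fibers; (ii) McLean's theorem and fiberwise dualization only operate over $B\setminus\Delta$, and the extension across the discriminant is precisely where the construction breaks --- the Gross--Siebert and Kontsevich--Soibelman programs produce a formal or non-Archimedean mirror from scattering data, not a geometric dual special Lagrangian fibration carrying a Ricci-flat metric; (iii) the fiberwise Fourier--Mukai transform with Poincar\'e kernel is rigorous only in the semi-flat setting, which is exactly the computation the paper carries out.

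So the gap is not a single missing lemma but the fact that your argument consists of reductions to statements that are each at least as hard as the conjecture itself. If your goal is to engage with what the paper actually establishes, the honest target is the semi-flat computation: show that under the Legendre-transform identification of $X=TD/\Lambda$ with $\hat X = T^{*}D/\Lambda^{*}$, a special Lagrangian section of $\hat\pi$ equipped with a flat $U(1)$ connection corresponds under the fiberwise transform to a line bundle on $X$ whose Chern connection satisfies
\[
{\rm Im}\left(e^{-\sqrt{-1}\hat\theta}(\omega+F_A)^n\right)=0,
\]
which is the deformed Hermitian--Yang--Mills equation. That is a theorem, and it is the content of the paper's Section 1.3; the global conjecture should be labeled as such and not presented as something your outline proves.
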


\subsection{The D-brane effective action and the deformed Hermitian Yang-Mills equation}
D-branes play an important role in SYZ picture of mirror symmetry and homological mirror symmetry, so it is interesting to further study their behavior under mirror symmetry.
D-brane dynamics can be studied using the low energy effective action,  and in particular BPS solutions are described as critical points of this action. The bosonic part of supersymmetric Dirac-Born-Infeld (DBI) action of a single D$p$ brane has the following form:
\[
I_p=I_{DBI}+I_{WZ}=-T_p\int_W d^{p+1}\sigma \sqrt{g_{\mu\nu}+{\mathcal F}_{\mu\nu}}+\mu \int_W C\wedge e^{{\mathcal F}}.
\]
Here $g_{\mu\nu}$ is the pull-back of the metric, and ${\mathcal F}_{\mu\nu}$ is the modified two form ${\mathcal F}_{\mu\nu}=2\pi \alpha^{'}(F-B)$ with $F$ is the field strength of a gauge field on the D-brane world volume,
and $B$ the the pull-back of NS two form, often called the $B$-field. $T_p$ and $\mu$ are the brane tension and brane charges respectively, while $C$ is the formal sum of RR fields $C^{(r)}$:
\[
C=\sum_{r=0}^{10} C^{(r)}.
\]
This action is invariant under the $\kappa$ symmetry (a fermionic local symmetry) and space-time supersymmetry, and we can combine both in determining the fraction of unbroken supersymmetry by solving following equation
\[
(1-\Gamma)\eta =0.
\]
Here $\eta$ is the spacetime spinor, and $\Gamma$ is a Hermitian traceless matrix satisfying
\[
\text{tr} \Gamma =0, ~~\Gamma^2=1.
\]
Let's focus on a Calabi-Yau three manifold from now on, and assume we have a no-where vanishing holomorphic three form $\Omega$ and a Kahler form $J$ (to be consistent with the physics notation).
 The solutions of the BPS equation for the DBI action of a D$p$ brane were derived in \cite{MMMS}.  We have
\begin{itemize}
\item $p+1=3$: M is special Lagrangian, and the modified field strength ${\mathcal F}_{\mu\nu}=0$. The Lagrangian condition is $J|_M=0$ and the special Lagrangian condition is 
\begin{equation*}
\text{Im}~e^{\sqrt{-1}\theta} \Omega|_M=0.
\end{equation*}
\item $p+1=2n$ is even: M is  holomorphic, and the modified field strength satisfies the following equation
\begin{equation}\label{eq: dHYMphys}
\begin{aligned}
& {\mathcal F}^{2,0}=0,  \\
& \frac{1}{n!}(f^*(J)+{\mathcal F})^{n}=e^{\sqrt{-1}\theta}{\sqrt{|J+{\mathcal F}|}\over \sqrt{|J|}} vol(M).
\end{aligned}
\end{equation}
Here $f^*(J)$ is the pull back of Kahler form on cycle $M$. This equation is called the {\em deformed Hermitian Yang-Mills equation}. Notice that there is a $\alpha^{'}$ factor in front of ${\mathcal F}$, 
so the leading order term of the second equation is then
\[
{\mathcal F}\wedge J^{n-1}=c J^{n},
\]
which is simply the Hermitian-Yang-Mills equation. The second equation can also be put in the following form
\[
\text{Im}\left(e^{-\sqrt{-1}\theta}(J+{\mathcal F})^n\right)=0.
\]
where $J$ is the Kahler form.

\end{itemize}

\subsection{The semi-flat limit of SYZ mirror symmetry}
Let's summarize the BPS solution of the DBI action: we either have a special Lagrangian (sLag) cycle with vanishing gauge field strength, or a holomorphic 
cycle with connection satisfying deformed Hermitian-Yang-Mills (dHYM) equation. Mirror symmetry exchanges D-branes wrapping sLag cycles and D-brane wrapping holomorphic cycles. 
Thus, under SYZ mirror picture, we should see the exchange of sLag branes with flat connections and holomorphic branes with dHYM connections. 

The SYZ picture of mirror symmetry uses D-branes wrapping on the whole $T^3$ fibre of $X$, and the dual is a brane wrapped on a single point of the dual torus fibre of the mirror manifold $\hat{X}$.
If we study BPS D-branes wrapping on a single point of the torus fibre and wrapping on the whole base $B$, after T-duality the mirror should be a D brane wrapping on the whole manifold $\hat{X}$.

It is in general difficult to study the full moduli space of D-branes, but it is possible to check the above picture by looking at the semi-flat limit of the SYZ torus fibration \cite{LYZ}.  We briefly recall the set-up for semi-flat mirror symmetry, but refer the reader to the beautiful paper of Hitchin \cite{Hit, Hit1} (see also \cite{L}).  Fix an affine manifold $D$, which we assume is a domain in $\mathbb{R}^{n}$ (for example, the fundamental domain of a torus).  Let $x^i$ denote coordinates on $D$, and let $y^i$ be coordinates on $TD$ induced by
\[
(y^1,\ldots,y^n) \longmapsto \sum_iy^{i}\frac{\del}{\del x^i}.
\]
The bundle $TD$ carries a natural complex structure making the coordinates  $z^i=x^i+\sqrt{-1}y^i$ holomorphic.  At the same time, the bundle $T^{*}D$ carries a natural symplectic structure by defining 
\[
\omega= \sum_{i} dx^i \wedge dy^i.
\]
Let $\pi: TD \rightarrow D$, and $\hat{\pi}:T^{*}D \rightarrow D$ be the projections.  Let $\phi: D \rightarrow \mathbb{R}$ be a smooth strictly convex function solving the Monge-Amp\`ere equation
\[
\det \left( \frac{\del^2 \phi}{\del x_i \del x_j}\right) =1.
\]
Pulling back $\phi$ by $\pi$ to the total space of $TD$ induces a Calabi-Yau metric, and hence a metric on $T^{*}D$.  By the $2$ of out $3$ rule for K\"ahler manifolds, this induces a complex structure on $T^{*}D$.  We can compactify this picture by taking dual lattices $\Lambda\subset TD$, and  $\Lambda^{*}\subset T^{*}D$, and passing to the quotient $X:= TD/\Lambda$, $\hat{X}:= T^{*}D/\Lambda^{*}$.  In this case $X, \hat{X}$ are mirror Calabi-Yau manifolds.  This is semi-flat mirror symmetry.  In local coordinates, the Ricci-flat Kahler metric and K\"ahler form on $X$ are
\[
\begin{aligned}
& g=\sum_{i,j}{\partial \phi\over \partial x^i \partial x^j}(dx^idx^j+dy^idy^j), \nonumber\\
& \omega ={\sqrt{-1}\over2}\sum_{i,j}{\partial \phi\over \partial x^i \partial x^j} dz^i\wedge d\bar{z}^j, \nonumber\\
& \Omega=dz^1\wedge\ldots \wedge dz^n.
\end{aligned}
\]
The SYZ mirror $\hat{X}$ is found by T-duality on the torus fibers.  Let $\tilde{y}^{i}$ denote coordinates on $T^{*}D$ dual to $y^{i}$.  Define coordinates $\tilde{x}$ by the Legendre transform of $\phi$,
\[
{\partial \tilde{x}^j\over \partial x^k}=\phi_{jk}.
\]
 The reader can calculate directly that  $\tilde{z}^j=\tilde{x}^j+\sqrt{-1}\tilde{y}^j$ define holomorphic coordinates on $\hat{X}$.  In this notation the geometric data for $\hat{X}$ is:
\[
\begin{aligned}
& \tilde{g}=\sum_{i,j}\phi^{ij}(d\tilde{x}^id\tilde{x}^j+d\tilde{y}^id\tilde{y}^j) \nonumber\\
& \tilde{\omega} ={\sqrt{-1}\over2}\sum_{i,j}\phi^{ij} d\tilde{z}^i\wedge d\bar{\tilde{z}}^j \nonumber\\
&\tilde{\Omega}=d\tilde{z}^1\wedge\ldots\wedge d\tilde{z}^n.
\end{aligned}
\]
We now consider a section of the fibration $\hat{\pi}:\hat{X} \rightarrow D$, $\sigma := \{x \mapsto \tilde{y}^i(x)\}$.  This section will be Lagrangian if
\[
\frac{\del \tilde{y}^{i}}{\del x^k} = \frac{\del \tilde{y}^k}{\del x^i}
\]
which implies that $\sigma$ can be (locally) written as the graph of a $1$-form $df: D \rightarrow \hat{X}$.  Now we impose the assumption that $\sigma$ is special Lagrangian; namely
\[
{\rm Im}\left(e^{-\sqrt{-1}\hat{\theta}}\hat{\Omega}\right)\bigg|_{\sigma}=0
\]
for a constant $\hat{\theta}$.  It is most convenient to write the graph in terms of the Legendre transformed coordinates.  We have
\[
\frac{\del f}{\del x_j} dx^{j} = \phi^{j\ell} \frac{\del f}{\del x_j} d\tilde{x}^{\ell}
\]
so in terms of the Legendre transform coordinates the graph is
\[
x \mapsto \left(\tilde{x}^i = \tilde{x}^i(x),\quad  \tilde{y}^\ell= \phi^{j\ell} \frac{\del f}{\del x_j}\right).
\]
It follows that
\[
d\tilde{z}_i\bigg|_{\sigma}= \left[\phi_{ij} +\sqrt{-1} \left( \phi^{pi} \frac{\del f}{\del x_j \del x_p} - \phi^{pm}\phi_{jmk}\phi^{ki}\frac{\del f}{\del x_p}\right)\right]dx^j
\]
and thus the special Lagrangian condition is 
\[
{\rm Im}\left[e^{-\sqrt{-1}\hat{\theta}} \det \left(\phi_{ij} +\sqrt{-1} \left( \phi^{pi} \frac{\del f}{\del x_j \del x_p} - \phi^{pm}\phi_{jmk}\phi^{ki}\frac{\del f}{\del x_p}\right) \right)\right]=0
\]
In order to translate this to the mirror manifold $X$, we will need the Fourier-Mukai transform.  Fix a point $x \in D$, and consider the fiber $\hat{T} = \hat{\pi}^{-1}(x) \subset \hat{X}$.  This is the dual torus to $T = \pi^{-1}(x)\subset X$.  A point $\hat{y} \in \hat{T}$ defines a map $T \mapsto \mathbb{R}/\mathbb{Z}$, by $y\mapsto \hat{y}^jy_j$.  This map is induced from integrating the flat connection
\[
D_{A}:= d + \sqrt{-1}\tilde{y}^jdy_j
\]
on the trivial $\mathbb{C}$ bundle over $T$.  This construction, performed on each fiber, yields a $U(1)$ connection on $X$, and so a complex line bundle $L$ with connection $D_A$.  The curvature of this bundle is
\[
D_{A}^2 = \sqrt{-1}\sum_{i,j} \frac{\del \tilde{y}^{j}}{\del x_{i}} dx^i \wedge dy^j.
\]
The $(0,2)$ part of the curvature is given by 
\[
\frac{\del \tilde{y}^{j}}{\del x_{i}}  - \frac{\del \tilde{y}^{i}}{\del x_{j}} 
\]
and so the induced bundle $L$ has a holomorphic structure precisely when the section $\sigma$ is Lagrangian.  The complex structure of $L$ is given by the operator
\[
\dbar_{A} = \dbar  -\frac{1}{2}\tilde{y}^j d\bar{z}^{j}.
\]
Recall that $\sigma$ is the graph of $df$.  Therefore a holomorphic frame for $L$ is given by the section $\sigma = e^{f}$, and in this frame, the connection is the Chern connection with respect to the metric
\[
h = e^{2f}.
\]
Let's see what this corresponds to under the Fourier-Mukai transform.  The $(1,1)$ component of the curvature of the mirror line bundle $L$ with connection $D_A$ is
\[
\begin{aligned}
F_{i\bar{j}}dz^i\wedge d\bar{z}^j &= -\frac{1}{2}\left(\frac{\del \tilde{y}^i}{\del x_j} + \frac{\del \tilde{y}^j}{\del x_i}\right)\sqrt{-1}(dy^i\wedge dx^j - dx^i\wedge dy^j)\\
&= \frac{\del \tilde{y}^i}{\del x_j}\sqrt{-1}(x^j\wedge dy^i+ dx^i\wedge dy^j)
\end{aligned}
\]
where in the last line we used the Lagrangian condition.  Now, using the Legendre transform we can write
\[
\frac{\del \tilde{y}^i}{\del x_j} = \left( \phi^{pi} \frac{\del f}{\del x_j \del x_p} - \phi^{pm}\phi_{jmk}\phi^{ki}\frac{\del f}{\del x_p}\right)
\]
and so the special Lagrangian equation is equivalent to
\[
{\rm Im}\left(e^{-i\hat{\theta}}(\omega + F_A)^n\right)=0
\]
Summarizing we have that the curvature $F_A$ satisfies the following equations:
\[
\begin{aligned}
 F_A^{2,0}&=0 \nonumber\\
 {\rm Im }(\omega+F_A)^n&=\tan(\theta)\, {\rm Re} (\omega+F_A)^n.
\end{aligned}
\]
which is precisely the dHYM equation. This correspondence easily extends to the general setting where $\sigma$ is equipped with a flat $U(1)$ connection.

\section{Analytic aspects of the dHYM equation}

Let $(X,\omega)$ be a compact K\"ahler manifold, and let $\mathfrak{a} \in H^{1,1}(X,\mathbb{R})$ be a given cohomology class.  Often we will assume that $\fa = c_1(L)$ for some holomorphic line bundle $L$, but this is only for aesthetic purposes.  We do not assume $X$ is Calabi-Yau, as in general $BPS$ D-branes correspond to solutions of the deformed Hermitian-Yang-Mills (dHYM) equation supported on proper submanifolds of a Calabi-Yau.  We are interested in the following question.
\begin{que}
When does there exist a smooth representative $\alpha$ of the fixed class $\fa$ so that
\begin{equation}\label{eq: dHYM}
(\omega+\sqrt{-1}\alpha)^{n} = r e^{\sqrt{-1}\hat{\theta}} \omega^{n}
\end{equation}
where $e^{\sqrt{-1}\hat{\theta}} \in S^{1}$ is a constant, and $r: X \rightarrow \mathbb{R}_{>0}$ is a smooth function.
\end{que}

Strictly speaking, comparing the expression~\eqref{eq: dHYM} with~\eqref{eq: dHYMphys}, the reader will see that we are considering the dHYM equation for $L^{-1}$, but this is just a matter of convention. We make a few preliminary observations.  First, fix a point $p\in X$, and choose holomorphic coordinates centered at $p$ so that
\[
\omega(p) = \frac{\sqrt{-1}}{2}\sum_i dz_i\wedge d\bar{z}_i \qquad \alpha(p) = \frac{\sqrt{-1}}{2}\sum_{i}\lambda_i dz_i \wedge d\bar{z}_i.
\]
Invariantly, the numbers $\lambda_i$ are the eigenvalues of the relative endomorphism $\omega^{-1}\alpha$; we will sometimes refer to these as the eigenvalues of $\alpha$, and we hope that no confusion will result.  At $p$ we have
\[
\frac{(\omega+\sqrt{-1}\alpha)^{n}}{\omega^{n}}(p) = \prod_{i} (1+\sqrt{-1}\lambda_i) = r_{\omega}(\alpha)e^{\sqrt{-1} \Theta_{\omega}(\alpha)}
\]
where 
\begin{equation}\label{eq: defnOfOps}
r_{\omega}(\alpha) = \sqrt{ \prod_i (1+\lambda_i^2)}, \qquad \Theta_{\omega}(\alpha) = \sum_{i} \arctan (\lambda_i).
\end{equation}
In this notation the deformed Hermitian-Yang-Mills equation can be written has
\begin{equation}\label{eq: cxsLag}
\Theta_{\omega}(\alpha) = \hat{\theta} \qquad \mod 2\pi.
\end{equation}
The constant $e^{\sqrt{-1}\hat{\theta}}$ is determined by cohomology by the requirement
\[
\int_{X}(\omega+\sqrt{-1}\alpha)^{n} \in \mathbb{R}_{>0}e^{\sqrt{-1}\hat{\theta}}.
\]
From this observation we obtain the first obstruction to existence of solutions to the deformed Hermitian-Yang-Mills equation.
\begin{lem}
If there exists a solution to the deformed Hermitian-Yang-Mills equation then
\[
\int_{X}(\omega +\sqrt{-1} \alpha)^{n} \in \mathbb{C}^{*}.
\]
\end{lem}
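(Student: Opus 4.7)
The statement is essentially an immediate consequence of the pointwise normalization already set up in the excerpt, so the plan is to combine that pointwise identity with positivity of the volume form to get the integral conclusion, and then to observe that the integral is a cohomological invariant so the obstruction lives in $H^{*}(X,\mathbb{C})$.

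First I would recall that the dHYM equation is precisely the equation $(\omega + \sqrt{-1}\alpha)^{n} = r e^{\sqrt{-1}\hat{\theta}} \omega^{n}$ with $r = r_{\omega}(\alpha) > 0$ a smooth positive function and $e^{\sqrt{-1}\hat{\theta}} \in S^{1}$ a \emph{constant}. The key point is that $r$ is strictly positive (indeed $r_{\omega}(\alpha) = \sqrt{\prod_i(1+\lambda_i^2)} \geq 1$ from the definition in \eqref{eq: defnOfOps}) and that the phase factor is independent of the point of $X$.

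Next I would integrate both sides of the equation against $X$. Since $r \omega^{n}$ is a smooth positive volume form on a compact manifold, $\int_{X} r \omega^{n}$ is a positive real number. Factoring out the constant phase,
\[
\int_{X}(\omega + \sqrt{-1}\alpha)^{n} = e^{\sqrt{-1}\hat{\theta}} \int_{X} r \omega^{n} \in \mathbb{R}_{>0}\, e^{\sqrt{-1}\hat{\theta}},
\]
which is a nonzero complex number, hence lies in $\mathbb{C}^{*}$. Finally I would remark that the left-hand side depends only on the cohomology classes $[\omega]$ and $\mathfrak{a}$, because $(\omega + \sqrt{-1}\alpha)^{n}$ is a $d$-closed form whose class is determined by $[\omega] + \sqrt{-1}\mathfrak{a}$; thus the condition $\int_{X}(\omega + \sqrt{-1}\alpha)^{n} \neq 0$ is a genuine cohomological obstruction on the pair $([\omega], \mathfrak{a})$.

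There is no real obstacle here: the argument is a one-line consequence of positivity and the constancy of the phase. The only subtlety worth flagging (for later use in the paper) is that this obstruction actually carries more information, namely it \emph{determines} the constant $\hat{\theta}$ modulo $2\pi$ as the argument of the complex number $\int_{X}(\omega + \sqrt{-1}\alpha)^{n}$, which is exactly the cohomological computation of $\hat{\theta}$ used implicitly in \eqref{eq: cxsLag}.
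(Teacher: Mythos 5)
Your proof is correct and is exactly the argument the paper intends: integrating the equation $(\omega+\sqrt{-1}\alpha)^{n}=r e^{\sqrt{-1}\hat{\theta}}\omega^{n}$ and using $r=r_{\omega}(\alpha)\geq 1$ together with the constancy of the phase shows the integral lies in $\mathbb{R}_{>0}e^{\sqrt{-1}\hat{\theta}}\subset\mathbb{C}^{*}$. Your closing remarks on the cohomological nature of the obstruction and on recovering $\hat{\theta}$ match the surrounding discussion in the paper as well.
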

This obstruction is non-trivial in dimensions $n \geq 3$, and we will return to it in the next section.  Fix a reference metric $\alpha_0 \in \fa$.  By the $\ddbar$-lemma, any representative of $\fa$ can be written as 
\[
\alpha_{\phi}: = \alpha_0 + \ddb \phi
\]
where $\phi: X \rightarrow \mathbb{R}$. By~\eqref{eq: cxsLag}, the deformed Hermitian-Yang-Mills equation is the natural complex geometric analog of the special Lagrangian graph equation, which we essentially recounted in Section~\ref{sec: phys}.  Let us recall this problem explicitly.  Let $\mathbb{C}^{n} = \mathbb{R}^{n} +\sqrt{-1} \mathbb{R}^{n}$, which we equip with the standard Calabi-Yau structure
\[
\omega =\frac{ \sqrt{-1}}{2} \sum_i dz_i \wedge d\bar{z}_i \qquad \Omega= dz_1\wedge dz_2 \wedge \ldots \wedge dz_n.
\]
Let $f: \mathbb{R}^{n} \rightarrow \mathbb{R}$, and consider the graph of the gradient map of $x\mapsto (x,\nabla f(x))$, which we denote by $L$.  We seek $f$ so that $L$ is {\em special Lagrangian} with respect to the Calabi-Yau structure defined by $\omega, \Omega$.  That is,
\[
\omega|_{L} =0\qquad \Omega|_{L} = e^{\sqrt{-1}\hat{\theta}} dVol_{L}
\]
for some constant $e^{\sqrt{-1}\hat{\theta}} \in S^{1}$.  A straightforward computation shows that this is equivalent to
\[
\sum_{i=1}^{n} \arctan(\lambda_i) = \hat{\theta} \quad \mod 2\pi
\]
where $\lambda_i$ are the eigenvalues of the $D^{2}f$.  Special Lagrangian manifolds were first introduced by Harvey-Lawson \cite{HL} as an example of a calibrated submanifold.  In particular, special Lagrangian submanifolds are automatically volume minimizing in their homology class. We refer the reader to \cite{Hit} for a beautiful introduction to study of sLag manifolds.

Solutions of the deformed Hermitian-Yang-Mills equation also minimize a certain volume functional.  Consider the map
\[
\fa \ni \alpha \longmapsto V_{\omega}(\alpha) := \int_{X} r_{\omega}(\alpha) \omega^{n}.
\]
 Where $r_{\omega}(\alpha)$ is defined in~\eqref{eq: defnOfOps}.  We have
 \begin{prop}[Jacob-Yau \cite{JY}]\label{prop: BPS}
 Define $\hat{r} \geq 0$ by
 \[
 \hat{r} = \left| \int_{X} (\omega+\sqrt{-1}\alpha)^{n} \right|
 \]
Then we have $ V_{\omega}(\alpha) \geq \hat{r}$.  Furthermore, a smooth form $\alpha$ minimizes $V_{\omega}(\cdot)$ if and only of $\alpha$ solves the deformed Hermitian-Yang-Mills equation.  In this case, the minimum value of $V_{\omega}$ is precisely $\hat{r} >0$.
\end{prop}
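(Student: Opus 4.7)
My plan is to prove the proposition by reducing it to the triangle inequality for complex-valued integrals, applied to the pointwise identity that defined $r_{\omega}$ and $\Theta_{\omega}$ in the first place.

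First I would unwind the definitions. By~\eqref{eq: defnOfOps} and the pointwise diagonalization preceding it, at every point of $X$ one has
\[
(\omega + \sqrt{-1}\alpha)^{n} = r_{\omega}(\alpha)\, e^{\sqrt{-1}\Theta_{\omega}(\alpha)}\, \omega^{n},
\]
and $r_{\omega}(\alpha)$ is strictly positive. Integrating over $X$ and taking the modulus,
\[
\hat{r} = \Big|\int_{X} (\omega + \sqrt{-1}\alpha)^{n}\Big| = \Big|\int_{X} r_{\omega}(\alpha)\, e^{\sqrt{-1}\Theta_{\omega}(\alpha)}\, \omega^{n}\Big| \leq \int_{X} r_{\omega}(\alpha)\, \omega^{n} = V_{\omega}(\alpha),
\]
where the inequality is the triangle inequality for the unimodular integrand $e^{\sqrt{-1}\Theta_{\omega}(\alpha)}$ against the positive measure $r_{\omega}(\alpha)\omega^{n}$. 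This establishes the lower bound.

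For the equality case I would invoke the sharp form of the triangle inequality: if $d\mu$ is a positive measure and $u$ is a measurable function of modulus one with $|\int u \, d\mu| = \int d\mu$, then $u$ is $d\mu$-a.e.\ constant. Since $r_{\omega}(\alpha)\omega^{n}$ is smooth and strictly positive, the equality $V_{\omega}(\alpha) = \hat{r}$ forces $e^{\sqrt{-1}\Theta_{\omega}(\alpha)}$ to equal a constant unit complex number $e^{\sqrt{-1}\hat{\theta}}$ throughout $X$, which is exactly the dHYM equation~\eqref{eq: cxsLag}. Conversely, if $\alpha$ solves dHYM, then $e^{\sqrt{-1}\Theta_{\omega}(\alpha)}$ factors out of the integral so that the triangle inequality becomes an equality, while the identity
\[
\int_{X} (\omega + \sqrt{-1}\alpha)^{n} = e^{\sqrt{-1}\hat{\theta}} \int_{X} r_{\omega}(\alpha)\,\omega^{n}
\]
shows $\hat{r} = V_{\omega}(\alpha) > 0$ since $r_{\omega}(\alpha) > 0$ pointwise.

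I do not anticipate any substantive obstacle; the proof is essentially just the pointwise algebraic identity together with the triangle inequality. The only minor points requiring care are (a) using continuity of $\Theta_{\omega}(\alpha)$ to pass from ``constant modulo $2\pi$'' to a single genuine angle $\hat{\theta}$ on each connected component of $X$, and (b) noting that $\hat{r}$, being the modulus of the cohomological pairing $\int_{X}(\omega+\sqrt{-1}\alpha)^{n}$, depends only on the class $\fa$ and not on the representative, so the bound $V_{\omega}(\alpha) \geq \hat{r}$ meaningfully compares the functional $V_{\omega}$ on the class with a fixed cohomological number.
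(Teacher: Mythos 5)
Your argument is correct and is precisely the standard proof of this BPS-type bound: the paper itself states the proposition without proof, deferring to Jacob--Yau, and their argument is exactly your pointwise identity $(\omega+\sqrt{-1}\alpha)^n = r_{\omega}(\alpha)e^{\sqrt{-1}\Theta_{\omega}(\alpha)}\omega^n$ followed by the triangle inequality, with the equality case characterized by constancy of the unimodular factor. The only caveat worth recording is that ``minimizes $V_{\omega}$'' must be read as ``attains the cohomological lower bound $\hat{r}$'' (which is what you prove); a priori the infimum of $V_{\omega}$ over the class could exceed $\hat{r}$ when no dHYM solution exists, but that is an imprecision in the statement rather than a gap in your proof.
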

 
Note that since since $\arctan(\cdot): \mathbb{R} \rightarrow \mathbb{R}$ is increasing, $\Theta_{\omega}(\cdot)$ is an elliptic second order operator.  A consequence of this is
\begin{lem}[Jacob-Yau \cite{JY}]
Solutions of the deformed Hermitian-Yang-Mills equation are unique, up to addition of a constant.
\end{lem}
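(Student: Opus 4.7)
The plan is to apply the strong maximum principle to the difference of two solutions, exploiting the ellipticity of $\Theta_{\omega}$ noted immediately above the lemma. Let $\phi$ and $\phi'$ be two smooth solutions with $\alpha_{\phi},\alpha_{\phi'}$ realizing the same lift $\Theta_{\omega}(\alpha)=\hat{\theta}$ (rather than $\hat{\theta}+2\pi k$ for some $k\in\mathbb{Z}$; connectedness of $X$ ensures any such integer lift is globally constant in $x$, and fixing $\hat{\theta}\in\mathbb{R}$ at one point fixes it everywhere). Set $\psi:=\phi'-\phi$ and consider the affine interpolation $\phi_{t}:=\phi+t\psi$ for $t\in[0,1]$, so that $\alpha_{\phi_{t}}=\alpha_{\phi}+t\,\ddb\psi$ depends linearly on $t$. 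Integrating along this path,
\[
0 \;=\; \Theta_{\omega}(\alpha_{\phi_{1}}) - \Theta_{\omega}(\alpha_{\phi_{0}}) \;=\; \int_{0}^{1}\frac{d}{dt}\Theta_{\omega}(\alpha_{\phi_{t}})\,dt \;=:\; L\psi.
\]

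The key computation is to make the linearization $D\Theta_{\omega}$ explicit. Diagonalize $A_{t}:=\omega^{-1}\alpha_{\phi_{t}}$ at a point $p$ with eigenvalues $\lambda_{i}(t)$; from $\Theta_{\omega}(\alpha_{\phi_{t}})=\sum_{i}\arctan\lambda_{i}(t)$ and first-order matrix perturbation theory for the eigenvalues, one obtains
\[
\frac{d}{dt}\Theta_{\omega}(\alpha_{\phi_{t}})(p) \;=\; \Tr\!\left(\bigl(I+A_{t}^{2}\bigr)^{-1}\,\omega^{-1}\ddb\psi\right)\!(p).
\]
Since $A_{t}$ is Hermitian with respect to $\omega$, the endomorphism $(I+A_{t}^{2})^{-1}$ is Hermitian positive definite, so this expression defines a linear second-order elliptic operator on $\psi$ with no zeroth- or first-order terms and with uniformly positive principal symbol on the compact manifold $X$. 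Averaging in $t$, the operator $L$ inherits the same properties.

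Finally, with $L\psi\equiv 0$ and $L$ elliptic and free of lower-order terms, the strong maximum principle on the compact K\"ahler manifold $X$ forces $\psi$ to be constant: at any interior maximum $p$ one has $\ddb\psi(p)\leq 0$ as a Hermitian form and thus $L\psi(p)\leq 0$, and the strong maximum principle promotes the equality $L\psi\equiv 0$ to the statement that $\psi$ cannot be non-constant.

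I do not anticipate a genuine obstacle. The one point requiring care is the ellipticity of the linearization, which reduces to the positivity of $(I+A_{t}^{2})^{-1}$ along the interpolation and is immediate from self-adjointness of $A_{t}$; the only other subtle item is the branch issue flagged in the first paragraph, which is built into the hypothesis that both $\phi$ and $\phi'$ solve the equation for the same specific value of $\hat{\theta}\in\mathbb{R}$, and not merely modulo $2\pi$.
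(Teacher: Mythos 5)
Your core argument is the same as the paper's: interpolate affinely between the two solutions, integrate the linearization of $\Theta_{\omega}$ along the path to produce a second-order operator with no zeroth-order term annihilating $\psi=\phi'-\phi$, check that the coefficient matrix $\left(I+A_t^2\right)^{-1}$ is positive definite so the averaged operator is uniformly elliptic, and conclude by the strong maximum principle on the compact manifold. That part is correct as written.

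The one substantive difference is that you assume at the outset that both solutions realize the \emph{same real number} $\hat{\theta}$, whereas the equation as posed (and as the lemma is stated) only determines $e^{\sqrt{-1}\hat{\theta}}\in S^1$ from cohomology, i.e.\ the constant is fixed only modulo $2\pi$; for $n\geq 3$ two solutions could a priori have lifts $\theta_1,\theta_2\in(-n\tfrac{\pi}{2},n\tfrac{\pi}{2})$ differing by a nonzero multiple of $2\pi$. Your appeal to connectedness of $X$ only shows the integer lift is constant in $x$ for a \emph{single} solution; it says nothing about two different solutions sharing a lift, so declaring this ``built into the hypothesis'' narrows the statement rather than proving it. The paper closes this with a short preliminary step: at a point where $\phi_1-\phi_2$ attains its infimum one has $\alpha_1\geq\alpha_2$, hence $\Theta_{\omega}(\alpha_1)\geq\Theta_{\omega}(\alpha_2)$ there by monotonicity of $\arctan$, so $\theta_1\geq\theta_2$; swapping the roles gives $\theta_1=\theta_2$. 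Inserting those two lines at the start of your argument makes it a complete proof of the lemma as stated.
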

\begin{proof}
Suppose we have functions $\phi_i:X\rightarrow  \mathbb{R}$ where $i=1,2$, such that $\alpha_i := \alpha_{\phi_i}$ satisfy
\[
\theta_{\omega}(\alpha_i) = \theta_i
\]
for constants $\theta_i$.  Then $\phi_1 = \phi_2+c$ for some constant $c\in \mathbb{R}$.  Consider the function $\phi_1 -\phi_2$.  Let $p\in X$ be a point where $\phi_1-\phi_2$ achieves its infimum.  Then we have
\[
\alpha_1 \geq \alpha_2
\]
and hence $\theta_{\omega}(\alpha_1) \geq \theta_{\omega}(\alpha_2)$.  It follows that $\theta_1 \geq \theta_2$.  Swapping $1 \leftrightarrow 2$ we get that $\theta_1=\theta_2$.  Finally, we write
\[
0 = \int_{0}^{1} \ddt \Theta_{\omega}((1-t)\alpha_1 +t\alpha_2) = \left(\int_{0}^{1}L_{t}^{i\bar{j}} dt\right) \del_i \del_{\bar{j}} (\phi_1-\phi_2)
\]
where $L_{t}^{i\bar{j}}$ is the linearized operator of $\Theta_{\omega}(\cdot)$ at the point $(1-t)\alpha_1 +t\alpha_2$.  Since this is uniformly elliptic, the strong maximum principle implies $\phi_1-\phi_2$ is constant.
\end{proof}

A slightly more general result is

\begin{lem}\label{lem: liftAng}
Suppose $\omega$ is a K\"ahler form, and $\alpha \in \fa$ has the property that ${\rm osc}_{X}\Theta_{\omega}(\alpha) <\pi$.  Then
\begin{enumerate}
\item $\int_{X}(\omega+\sqrt{-1}\alpha)^{n} \in \mathbb{C}^{*}$.
\item Let $\theta_{\alpha} \in (-n\frac{\pi}{2}, n\frac{\pi}{2})$ be defined by
\[
\int_{X}(\omega+\sqrt{-1}\alpha)^{n} \in \mathbb{R}_{>0}e^{\sqrt{-1}\theta_{\alpha}} \qquad \theta_{\alpha} \in [\inf_{X}\theta_{\omega}(\alpha), \sup_{X} \theta_{\omega}(\alpha)].
\]
If  $\alpha'$ is another representative of the class $\fa$ with ${\rm osc}_{X}\Theta_{\omega}(\alpha') <\pi$, then we have $\theta_{\alpha} = \theta_{\alpha'}$.
\end{enumerate}
\end{lem}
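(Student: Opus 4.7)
The plan is to establish Part (1) and the definition of $\theta_\alpha$ together, and then prove the invariance $\theta_\alpha = \theta_{\alpha'}$ by a continuity argument. Throughout I use the pointwise identity $(\omega + \sqrt{-1}\alpha)^n = r_\omega(\alpha)\, e^{\sqrt{-1}\Theta_\omega(\alpha)}\omega^n$ with $r_\omega(\alpha) > 0$. Let $c = \frac{1}{2}(\inf_X \Theta_\omega(\alpha) + \sup_X \Theta_\omega(\alpha))$; the oscillation hypothesis forces $\Theta_\omega(\alpha) - c \in (-\pi/2, \pi/2)$ pointwise, so $\cos(\Theta_\omega(\alpha) - c) > 0$ everywhere and
\[
e^{-\sqrt{-1}c}\int_X (\omega + \sqrt{-1}\alpha)^n = \int_X r_\omega(\alpha)\, e^{\sqrt{-1}(\Theta_\omega(\alpha) - c)}\omega^n
\]
has strictly positive real part, establishing Part (1).

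For existence of $\theta_\alpha$ in the specified interval, I would observe that the integral is a positive combination (via the density $r_\omega(\alpha)\omega^n$) of unit vectors $e^{\sqrt{-1}\Theta_\omega(\alpha)(p)}$ lying on an arc of length less than $\pi$. Since the arc length is $<\pi$, the wedge $W := \{r e^{\sqrt{-1}\theta} : r > 0,\ \theta \in [\inf_X \Theta_\omega(\alpha), \sup_X \Theta_\omega(\alpha)]\}$ is a convex cone containing the arc, hence contains the integral. Therefore the argument admits a representative $\theta_\alpha$ in $[\inf_X \Theta_\omega(\alpha), \sup_X \Theta_\omega(\alpha)] \subset (-n\pi/2, n\pi/2)$, and uniqueness modulo $2\pi$ follows since the interval has length less than $\pi < 2\pi$.

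For the invariance claim, I would use the linear path $\alpha_t := (1-t)\alpha + t\alpha' \in \fa$. Cohomological invariance gives that $I(t) := \int_X (\omega + \sqrt{-1}\alpha_t)^n \equiv I$ is constant, so any continuous lift $\widetilde{\theta}:[0,1] \to \mathbb{R}$ of $\arg I(t) \pmod{2\pi}$ is constant. Choose $\widetilde\theta$ so that $\widetilde\theta(0) = \theta_\alpha$; it then suffices to identify $\widetilde\theta(1)$ with $\theta_{\alpha'}$. Provided ${\rm osc}_X \Theta_\omega(\alpha_t) < \pi$ for every $t \in [0,1]$, the bounds $\inf_X \Theta_\omega(\alpha_t)$ and $\sup_X \Theta_\omega(\alpha_t)$ are continuous in $t$, and the canonical representative $\theta_{\alpha_t}$ produced by the first two paragraphs is itself a continuous lift of $\arg I(t)\pmod{2\pi}$. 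By uniqueness of continuous lifts, $\widetilde\theta(t) = \theta_{\alpha_t}$ throughout; in particular $\theta_{\alpha'} = \theta_{\alpha_1} = \widetilde\theta(1) = \theta_\alpha$.

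The main obstacle is verifying that ${\rm osc}_X \Theta_\omega(\alpha_t) < \pi$ is preserved along the linear path: $\Theta_\omega$ is a highly nonlinear function of the eigenvalues of $\omega^{-1}\alpha_t$, and these eigenvalues themselves are not linear in $t$, so the condition is not automatic. If the line segment leaves the admissible set, the natural workaround is to show that $U := \{\beta \in \fa : {\rm osc}_X \Theta_\omega(\beta) < \pi\}$ is path-connected in an appropriate smooth topology and to replace the segment by a path inside $U$; local constancy of $\beta \mapsto \theta_\beta$ on $U$ (obtained from the continuous-lift argument applied to short subarcs) then forces constancy on each connected component. Establishing this path-connectedness is where I expect the real analytic difficulty to lie.
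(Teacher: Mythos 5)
Your Part (1) and the existence of $\theta_\alpha$ in the prescribed interval are fine and match the paper's reasoning (the half-space/convex-cone observation). The problem is the invariance step. Your argument hinges on the linear path $\alpha_t=(1-t)\alpha+t\alpha'$ staying inside the admissible set $U=\{\beta\in\fa:\ {\rm osc}_X\Theta_\omega(\beta)<\pi\}$, and you correctly identify that this is not automatic; but you then leave the proposed repair (path-connectedness of $U$) unproved. As written, this is a genuine gap: without control of ${\rm osc}_X\Theta_\omega(\alpha_t)$ you cannot assert that $\theta_{\alpha_t}$ is a well-defined continuous lift, so the continuity argument does not close. Moreover, connectedness of $U$ is not an easy fix — deciding the structure of such sublevel sets of $\Theta_\omega$ is essentially the hard analytic content of the whole subject, so one should not expect to dispose of it in a lemma at this level.

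The paper avoids the path entirely. Write $\alpha'=\alpha+\ddb\phi$ and set $I(\alpha)=[\inf_X\Theta_\omega(\alpha),\sup_X\Theta_\omega(\alpha)]$, similarly $I(\alpha')$. At a maximum point $p$ of $\phi$ one has $\ddb\phi(p)\le 0$, hence $\alpha'(p)\le\alpha(p)$ and, by monotonicity of $\Theta_\omega$ in the form, $\inf_X\Theta_\omega(\alpha')\le\Theta_\omega(\alpha')(p)\le\Theta_\omega(\alpha)(p)\le\sup_X\Theta_\omega(\alpha)$; the minimum point gives the reverse inequality. Together these force $I(\alpha)\cap I(\alpha')\ne\emptyset$. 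Since both intervals have length less than $\pi$, their union is an interval of length less than $2\pi$ containing both $\theta_\alpha$ and $\theta_{\alpha'}$, and these two numbers are congruent mod $2\pi$ because the underlying integral is a cohomological quantity; hence $\theta_\alpha=\theta_{\alpha'}$. If you want to salvage your write-up, replace the homotopy argument with this two-point maximum-principle comparison; everything else you wrote can stay.
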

\begin{proof}
The assumption that ${\rm osc}_{X}\theta_{\omega}(\alpha) <\pi$ implies that 
\[
\frac{(\omega+\sqrt{-1}\alpha)^n}{\omega^n}
\]
lies in a half space, and hence the integral cannot vanish.  To prove the second point define the interval
\[
I(\alpha) := [\inf_{X}\Theta_{\omega}(\alpha), \sup_{X} \Theta_{\omega}(\alpha)]. 
\]
Writing $\alpha' = \alpha+\ddb \phi$ and looking at the maximum and minimum of $\phi$ we see that $I(\alpha)\cap I(\alpha') \ne \emptyset$.  On the other hand, we have points  $\theta_{\alpha} \in I(\alpha)$ and $\theta_{\alpha'} \in I(\alpha')$ with $\theta_{\alpha} = \hat{\theta} \mod 2\pi = \theta_{\alpha'}$.  Since $I(\alpha), I(\alpha')$ have length $\pi$, this implies $\theta_{\alpha}=\theta_{\alpha'}$.
\end{proof}

\begin{defn}
Supposing that there exists some $\alpha \in \fa$ with ${\rm Osc}_{X}\Theta_{\omega}(\alpha) <\pi$, we will define $\theta = \theta_{\alpha}$ as in Lemma~\ref{lem: liftAng} to be the {\em lifted angle}.  Since this is independent of the choice of $\alpha$, we will drop the subscript $\alpha$.
\end{defn}

\begin{rk}
We note that the lifted angle is, a priori, not determined by cohomology.  We will discuss this issue in the next section.
\end{rk}

Let us now return to the problem of solving the deformed Hermitian-Yang-Mills equation.  Jacob-Yau \cite{JY} studied the solvability of the deformed Hermitian-Yang-Mills equation via a heat flow method.  They considered the flow
\begin{equation}\label{eq: heatFlow}
\ddt \phi = \Theta_{\omega}(\alpha_{\phi}) - \theta,
\end{equation}
where $\theta$ is the lifted angle (assuming this is well-defined).  They proved
\begin{thm}[Jacob-Yau \cite{JY}]
Suppose that $(X,\omega)$ has non-negative orthogonal bisectional curvature.  Let $L\rightarrow X$ be an ample line bundle. Let $h_0$ be a positively curved metric on $L$. Then for $k$ sufficiently large the heat flow~\eqref{eq: heatFlow} for metrics on $L^{k}$ with initial data $h_0^k$ exists for all time and converges to a solution of the deformed Hermitian-Yang-Mills equation.
\end{thm}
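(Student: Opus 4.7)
The plan is to run the standard parabolic machinery for the flow \eqref{eq: heatFlow}, with the essential feature being that taking $k$ large places us in a ``hypercritical phase'' regime where $\Theta_\omega$ has strong structural properties. Since $h_0$ is positively curved, its initial curvature $\alpha_0 \in c_1(L)$ is K\"ahler, so on $L^k$ one works with the representative $k\alpha_0$. The eigenvalues of $\omega^{-1}(k\alpha_0)$ are bounded below by $ck$ with $c>0$, hence $\arctan(k\lambda_i)\to \pi/2$ uniformly on $X$ and $\Theta_\omega(k\alpha_0) \to n\pi/2$. Thus for $k$ sufficiently large, the oscillation of $\Theta_\omega(k\alpha_0)$ is far less than $\pi$, so Lemma~\ref{lem: liftAng} gives a well-defined lifted angle $\theta$ close to $n\pi/2$, and one has the key inequality
\[
\Theta_\omega(k\alpha_0) > (n-2)\frac{\pi}{2} + 2\epsilon
\]
for some $\epsilon>0$. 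On the open set of forms satisfying this hypercritical inequality, $\Theta_\omega$ becomes a \emph{concave} elliptic operator with convex level sets.

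Short-time existence of the flow is standard since the linearization of $\Theta_\omega$ is elliptic. The flow preserves the hypercritical condition: differentiating \eqref{eq: heatFlow} in $t$ yields $(\partial_t - L_t)\dot\phi = 0$ where $L_t$ is the linearization at time $t$, so the maximum principle implies $\sup_X \dot\phi$ is non-increasing and $\inf_X \dot\phi$ is non-decreasing. Since $\dot\phi = \Theta_\omega(\alpha_\phi) - \theta$, the phase oscillation only shrinks, and the hypercritical inequality persists.

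The heart of the proof is the a priori estimates. A $C^0$ bound on $\dot\phi$ is immediate. An oscillation estimate for $\phi$ can be obtained from Moser iteration (or an ABP-type argument) combined with the bound on $\dot\phi$. The main obstacle is the $C^2$ estimate. One studies a test quantity of the form $Q = \log \lambda_{\max}(\omega^{-1}\alpha_\phi) + A\phi$ and applies the maximum principle to $(\partial_t - L_t)Q$. The expansion produces bad curvature terms of the shape $\sum_{i\neq j} c_{ij}\, R_{i\bar i j\bar j}$, where the $c_{ij}$ come from the linearized coefficients of $\Theta_\omega$ in an eigenbasis of $\omega^{-1}\alpha_\phi$. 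This is precisely where the non-negative orthogonal bisectional curvature hypothesis enters: it forces these terms to have a favorable sign, while the hypercritical phase ensures that the linearized coefficients are all positive with the correct relative magnitudes to absorb the remaining terms in a Yau-style argument. This yields $\lambda_{\max} \leq C$. Evans-Krylov then produces a $C^{2,\alpha}$ bound via the concavity of $\Theta_\omega$ on the hypercritical phase set, and Schauder bootstrap gives uniform $C^\infty$ estimates.

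With uniform smoothness in hand, long-time existence is automatic. For convergence, I would use that \eqref{eq: heatFlow} is a gradient flow for a Calabi-type functional whose $t$-derivative along the flow is $-\|\dot\phi\|_{L^2}^2$ (up to lower order terms). Combining this with the monotonicity of $\sup_X \dot\phi$ and $\inf_X \dot\phi$ and the uniform smoothness, one concludes $\dot\phi \to 0$ in $C^\infty$. After normalizing $\phi$ to have zero average, $\phi(t)$ converges in $C^\infty$ to a smooth limit $\phi_\infty$ with $\Theta_\omega(\alpha_{\phi_\infty}) = \theta$, as desired.
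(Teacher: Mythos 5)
The paper does not actually prove this theorem; it is stated as a quoted result of Jacob--Yau \cite{JY} with no argument given, so there is no in-paper proof to compare against. Measured against the strategy of \cite{JY}, your outline identifies the correct architecture: large $k$ pushes $\Theta_{\omega}(k\alpha_0)$ near $n\frac{\pi}{2}$ so the lifted angle is well defined; differentiating the flow in $t$ shows $\sup_X\dot\phi$ decreases and $\inf_X\dot\phi$ increases, so the phase bounds (and in particular positivity of $\alpha_\phi$) are preserved; the non-negative orthogonal bisectional curvature hypothesis is used exactly where you place it, to give the terms $R_{i\bar i j\bar j}$, $i\ne j$, a good sign in the maximum-principle argument for the second-order estimate; and Evans--Krylov plus bootstrapping close the estimates.

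That said, three points in your sketch would not survive as written. First, concavity of $\lambda\mapsto\sum_i\arctan\lambda_i$ requires the hypercritical range $\Theta>(n-1)\frac{\pi}{2}$, not $\Theta>(n-2)\frac{\pi}{2}$; your inequality only puts you in the supercritical range, where the operator merely has convex level sets. This is harmless here because large $k$ actually gives $\Theta$ near $n\frac{\pi}{2}$, but you should state the threshold you really use. Second, the convergence argument is not correct as stated: the flow is not a gradient flow with dissipation $-\|\dot\phi\|_{L^2}^2$. What is true (and what Jacob--Yau prove, mirroring the fact that Lagrangian mean curvature flow decreases volume) is that $\frac{d}{dt}V_{\omega}(\alpha_\phi)=-\int_X |\nabla\Theta|^2_{\eta}\, r_{\omega}(\alpha_\phi)\,\omega^n$ for the induced metric $\eta_{i\bar j}=g_{i\bar j}+\alpha_{i\bar k}g^{k\bar l}\alpha_{l\bar j}$, which relies on a divergence-free identity for $r\,\eta^{-1}$; alternatively one can avoid the functional entirely and use the monotone convergence of $\sup_X\dot\phi$ and $\inf_X\dot\phi$ together with a Harnack or strong-maximum-principle argument to force $\dot\phi$ to a constant, which must be $0$ by Lemma~\ref{lem: liftAng}. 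Third, the oscillation bound on $\phi$ is not ``immediate'': Moser iteration needs the one-sided bound $\Delta_{\omega}\phi\ge -C$, which you get only after observing that the preserved phase condition forces $\alpha_\phi>0$; this dependence should be made explicit since the $C^2$ estimate is run with a term $A\phi$ in the test quantity.
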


\begin{rk}
The reader can easily check that if $\fa$ is a K\"ahler class ample then for $k$ sufficiently large the lifted angle of $k\fa$ is well-defined.
\end{rk}

Furthermore, in dimension $2$, Jacob-Yau showed that the dHYM equation could be rewritten as the complex Monge-Amp\`ere equation.  As a result, on complex surfaces they gave necessary and sufficient algebraic conditions for the existence of solutions to the dHYM equation based Yau's solution of the complex Monge-Amp\`ere equation \cite{Y} and the Demailly-P\u{a}un characterization of the K\"ahler cone \cite{DP}.  In general it is desirable to obtain existence results for solutions of dHYM without any assumptions on the curvature of $(X,\omega)$.  Observe that if a solution $\alpha$ of the deformed Hermitian-Yang-Mills equation exists then for every $1\leq j \leq n$ we have
\[
\theta- \frac{\pi}{2} < \sum_{i\ne j} \arctan(\lambda_i) < \theta+\frac{\pi}{2},
\]
where $\lambda_i$ are the eigenvalues of $\alpha$.  Conversely, we have the following;

\begin{thm}[Collins-Jacob-Yau \cite{CJY}]
Suppose there exists a $(1,1)$ form $\chi \in \fa$ such that
\begin{equation}\label{eq: trivCond}
\Theta_{\omega}(\chi) \in ((n-2)\frac{\pi}{2}, n\frac{\pi}{2}).
\end{equation}
Let $\theta \in((n-2)\frac{\pi}{2}, n\frac{\pi}{2})$ be the lifted angle.  Suppose that for every $1 \leq j \leq n$ we have
\begin{equation}\label{eq: subSol}
\sum_{i\ne j} \arctan(\mu_i) \geq \theta - \frac{\pi}{2}.
\end{equation}
where $\mu_i$ are the eigenvalues of $\chi$.  Then there exists a smooth solution of the deformed Hermitian-Yang-Mills equation.
\end{thm}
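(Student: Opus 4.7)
The plan is to set up a one-parameter continuity family joining the subsolution $\chi$ to a dHYM solution and to prove openness and closedness. Writing $\psi := \Theta_\omega(\chi) - \theta$ and normalizing by $\sup_X \phi_t = 0$, I would study
$$\Theta_\omega(\chi + \sqrt{-1}\partial\bar\partial \phi_t) = \theta + (1-t)\psi, \quad t \in [0,1],$$
so that $\phi_0 \equiv 0$ solves the equation at $t=0$, while the equation at $t=1$ is precisely dHYM. Let $T \subset [0,1]$ be the set of $t$ for which there exists a smooth solution whose pointwise angle $\Theta_\omega$ remains in $((n-2)\frac{\pi}{2}, n\frac{\pi}{2})$; the goal is to show $T$ is open, closed, and contains $0$.

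Openness follows from the implicit function theorem. In a frame diagonalizing $\omega^{-1}(\chi + \sqrt{-1}\partial\bar\partial \phi_t)$ with eigenvalues $\lambda_i$, the linearization sends $v \mapsto \sum_i \frac{1}{1+\lambda_i^2} (\sqrt{-1}\partial\bar\partial v)_{i\bar i}$, which is uniformly elliptic; working modulo constants in $C^{2,\alpha}$ yields a surjective Fredholm operator, and the IFT concludes.

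Closedness reduces to a priori estimates. For $C^0$, concavity of $\Theta_\omega$ on the supercritical branch allows an Alexandrov--Bakelman--Pucci argument in the spirit of B\l{}ocki, adapted to dHYM, to bound $\|\phi_t\|_{C^0}$. The gradient estimate can be derived by a blow-up/rescaling argument or by a standard maximum principle on $|\nabla \phi|^2$. Once $\|\phi_t\|_{C^{1,1}}$ is under control, Evans--Krylov applies since the operator is concave and uniformly elliptic on the admissible range, and Schauder bootstrapping yields higher regularity.

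The main obstacle, and the technical heart of the proof, is the $C^2$ estimate, which is where the subsolution hypothesis~\eqref{eq: subSol} does the work. Geometrically, this hypothesis is precisely Sz\'ekelyhidi's \emph{C-subsolution} condition for the concave operator $\Theta_\omega$ in supercritical phase: it guarantees that at every point the eigenvalue vector of $\chi$ lies in the interior of a region, uniformly far from the boundary of the positivity cone of the linearized operator. I would run the maximum principle on a test function of the form
$$G = \log \lambda_{\max}(\sqrt{-1}\partial\bar\partial \phi_t) + A|\nabla \phi_t|^2 - B\phi_t,$$
with large constants $A, B$, differentiate the equation twice, and invoke Sz\'ekelyhidi's key lemma to produce a strictly positive term that absorbs the bad third-order and gradient errors arising from non-commutativity of covariant derivatives and the K\"ahler curvature of $\omega$. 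A secondary technical subtlety is that~\eqref{eq: subSol} is a \emph{non-strict} inequality, whereas the C-subsolution machinery traditionally asks for strictness; this is handled by perturbing $\theta \mapsto \theta - \epsilon$, obtaining estimates uniform in $\epsilon$, and passing to the limit, using the openness of the supercritical phase condition to ensure the limit lies in the admissible range.
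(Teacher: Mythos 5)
This theorem is stated in the survey as a cited result from \cite{CJY}; no proof is given in the paper itself, so there is nothing internal to compare against. Measured against the actual argument in \cite{CJY}, your outline identifies the right strategy: a continuity method whose closedness rests on Sz\'ekelyhidi-type $C$-subsolution a priori estimates, with the subsolution hypothesis~\eqref{eq: subSol} entering through the second-order estimate. That is indeed the architecture of the original proof. However, several of your steps have genuine gaps as written.

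First, your continuity path $\Theta_{\omega}(\chi+\sqrt{-1}\del\dbar\phi_t)=\theta+(1-t)\psi$ omits a normalizing constant. The linearization $v\mapsto \sum_i(1+\lambda_i^2)^{-1}v_{i\bar i}$ has one-dimensional kernel (constants) and index zero, hence one-dimensional cokernel; quotienting the domain by constants fixes the kernel but not the surjectivity, so the implicit function theorem does not apply to your map. One must solve $\Theta_{\omega}(\alpha_{\phi_t})=\theta+(1-t)\psi+b_t$ with $b_t\in\mathbb{R}$ an additional unknown, and then argue that $b_1=0$; this last step is exactly where the cohomological characterization of the lifted angle (Lemma~\ref{lem: liftAng}) is used, and it is not automatic. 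One also needs to check that the subsolution and supercritical conditions persist for the intermediate right-hand sides $h_t$, which works because both conditions are affine in $h$ and hold at $t=0$ and $t=1$, but requires control of $b_t$. Second, $\Theta_{\omega}$ is \emph{not} concave on the whole supercritical range $((n-2)\frac{\pi}{2},n\frac{\pi}{2})$: the function $\sum_i\arctan\lambda_i$ is concave only where all $\lambda_i>0$, i.e.\ above level $(n-1)\frac{\pi}{2}$; below that one eigenvalue may be negative and only the convexity of the level sets survives. Both your ABP step and your appeal to Evans--Krylov lean on concavity, so this must be repaired (e.g.\ by passing to a concave operator with the same level sets), as is done in \cite{CJY}. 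Third, the direct maximum principle on $|\nabla\phi|^2$ is not known to close for this equation; the known route is a $C^2$ bound of the form $|\del\dbar\phi|\leq C(1+|\nabla\phi|^2)$ followed by a blow-up/Liouville argument, so you should commit to that branch of your ``or.'' Finally, your proposed fix for the non-strict inequality---perturbing $\theta\mapsto\theta-\epsilon$---is inconsistent, since $\theta$ is determined by the cohomology classes of $\omega$ and $\fa$ and cannot be varied independently; any perturbation argument must instead perturb the data $(\omega,\fa)$ or the subsolution, and this is a real (not merely cosmetic) issue with the endpoint case.
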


We make a few remarks about the theorem.  First of all, the conditions are clearly necessary in order to solve the equation.  Secondly, the assumption that $\theta_{\omega}(\chi) \in ((n-2)\frac{\pi}{2}, n\frac{\pi}{2})$ is superfluous as soon as the lifted angle $\theta$ satisfies 
\[
\theta \geq  (n-2 +\frac{2}{n}) \frac{\pi}{2}.
\]
We remark also that if $\alpha$ is a K\"ahler form, then for $k$ sufficiently large we can always ensure that $k\alpha$ satisfies~\eqref{eq: trivCond}.  

\section{Algebraic aspects of the deformed Hermitian-Yang-Mills equation}

We now turn our attention to the algebraic aspects of the dHYM equation.  There are essentially two questions we would like to discuss in this section. 
\begin{enumerate}
\item Is it possible to define the lifted angle {\em algebraically}?
\item Are the algebraic obstructions to the existence of solutions to the deformed Hermitian-Yang-Mills equation?
\end{enumerate}

In regards to the second point, it is useful to recall the origin of the dHYM equation as the equation of motion for BPS $D$-branes on the B-model.  Douglas proposed a notion of $\Pi$-stability which he predicted would be related to the existence of BPS D-branes in mirror symmetry \cite{Doug, Doug1}.  Motivated by these ideas, Bridgeland \cite{Br} developed a theory of stability conditions on triangulated categories; we refer the reader to \cite{Asp} for a nice introduction to these ideas with connections to physics and mirror symmetry.  Since the dHYM equation is the geometric equation of motion for a BPS D-brane on the B-model, it is reasonable to expect that the solvability of the equation should be linked with $\Pi$-stability, or more generally Bridgeland stability.   The study of Bridgeland stability conditions has attracted considerable interest since their introduction.  Even a partial recounting of theory of Bridgeland stability conditions, and the many important results in this area, is far beyond the scope of this article.  Nevertheless, we will recall briefly the salient features which seem to appear in the study of dHYM; we refer the reader to \cite{MS} and the references therein for more on this active area of research.

We will focus specifically on the case of interest to mirror symmetry, so that the triangulated category is $D^{b}Coh(X)$.
\begin{defn}
A {\em slicing} $\cP$ of $D^{b}Coh(X)$ is a collection of subcategories $\cP(\varphi) \subset D^{b}Coh(X)$ for all $\varphi \in \mathbb{R}$ such that
\begin{enumerate}
\item $\cP(\varphi)[1] = \cP(\varphi+1)$ where $[1]$ denotes the ``shift" functor,
\item if $\varphi_1 > \varphi_2$ and $A\in \cP(\varphi_1)$, $B \in \cP(\varphi_2)$, then ${\rm Hom}(A,B) =0$,
\item every $E\in D^{b}Coh(X)$ admits a Harder-Narasimhan filtration by objects in $\cP(\phi_i)$ for some $1 \leq i \leq m$.
\end{enumerate}
\end{defn}

We refer to \cite{Br} for a precise definition of the Harder-Narasimhan property.  A Bridgeland stability condition on $D^{b}Coh(X)$ consists of a slicing together with a {\em central charge} (see below).  For BPS $D$-branes in the B-model, the relevant central charge is given by
\[
D^{b}Coh(X) \ni E \longmapsto Z_{\omega}(E):= -\int_{X}e^{-\sqrt{-1}\omega}ch(E).
\]
Often a factor of $\sqrt{Td(X)}$ is also included, but we will take the above choice (see, for example, \cite{BMT, AB}).
\begin{defn}
A Bridgeland stability condition on $D^{b}Coh(X)$ with central charge $Z_{\omega}$ is a slicing $\cP$ satisfying the following properties
\begin{enumerate}
\item For any non-zero $E\in \cP(\varphi)$ we have
\[
Z_{\omega}(E) \in \mathbb{R}_{>0} e^{\sqrt{-1}\varphi},
\]
\item
\[
C := \inf \left\{ \frac{|Z_{\omega}(E)|}{\|ch(E)\|} : 0 \ne E \in \cP(\varphi), \varphi \in \mathbb{R} \right\} >0
\]
where $\| \cdot \|$ is any norm on the finite dimensional vector space $H^{even}(X, \mathbb{R})$.
\end{enumerate}
\end{defn}

Given a Bridgeland stability condition we define $\mathcal{A} := \cP((0,1])$ which is called the {\em heart}.  An object $A \in \mathcal{A}$ is semistable (resp. stable) if, for every surjection $A\twoheadrightarrow B$ we have
\[
 \varphi(A) \leq (\text { resp.} <)\,\,\varphi(B).  
\]

In order to make aesthetic contact with Bridgeland stability we will consider throughout this section the case when $\fa = c_1(L)$ for some holomorphic line bundle $L$.  This does not serve any purpose other than to make the formulae slightly more appealing.  Furthermore, the dHYM equation with transcendental cohomology class also appears in mirror symmetry as the equation satisfied by  ``complexified K\"ahler forms" \cite{L}.  First we note that for representative $\alpha \in c_1(L)$ we have
\[
(\omega+\sqrt{-1}\alpha)^{n} = n!(\sqrt{-1})^n\left[e^{-\sqrt{-1}(\omega+\sqrt{-1}\alpha)}\right]_{top}
\]
and hence we have
\[
\int_{X}(\omega+\sqrt{-1}\alpha)^{n} = n!(\sqrt{-1})^n\int_{X}e^{-\sqrt{-1}(\omega)}ch(L).
\]
We are therefore lead to consider 
\[
Z_{\omega}(L) := -\int_{X}e^{-\sqrt{-1}\omega}ch(L).
\]
Note that if $L$ admits a solution of the deformed Hermitian-Yang-Mills equation with $\theta \in ((n-2)\frac{\pi}{2}, n\frac{\pi}{2})$ then ${\rm Im}(Z_{\omega}(L))>0$.  Define a path $\gamma(t) : [1,\infty) \rightarrow \mathbb{C}$ by
\[
\gamma(t) := Z_{t\omega}(L)= -\int_{X}e^{-t\sqrt{-1}\omega}ch(L).
\]
 If $\gamma(t) \in \mathbb{C}^{*}$, then we can define
\[
\theta(L) := \text{ Winding angle } \gamma(t)
\]
as $t$ runs from $+\infty$ to $1$.  In complex dimension $1$ we have
\[
\gamma(t) = -\int_{X}(c_1(L) - \sqrt{-1}\omega t) = \sqrt{-1}\int_{X}(t\omega+\sqrt{-1}c_1(L))
\]
and so 
\[
\theta(L) = {\rm Arg}_{p.v.}\int_{X}(\omega+\sqrt{-1}c_1(L)) + \frac{\pi}{2},
\]
where ${\rm Arg}_{p.v.}$ denotes the principal value of ${\rm Arg}$ with values in $(-\pi, \pi]$.  In dimension $2$ we have
\[
\gamma(t) = \frac{1}{2} \int_{X}t^{2}\omega^{2} - c_1(L)^{2} + \sqrt{-1}t\int_{X}c_1(L)\wedge \omega.
\]
If $\gamma(t) =0$ for some $t \in [1,\infty)$, then we must have
\[
\int_{X}c_1(L)\wedge \omega =0.
\]
But in this case the Hodge index theorem says that $\int_{X}c_1(L)^2 \leq 0$, and hence ${\rm Re}(\gamma(t)) \ne 0$.  Thus $\gamma(t)$ lies in $\mathbb{C}^{*}$ and hence $\theta(L)$ is well defined.  Furthermore, we have
\[
\theta(L)  = {\rm Arg}_{p.v.} \int_{X}(\omega+\sqrt{-1}c_1(L))^2
\]
In three dimensions we encounter the first difficulty.  We write
\[
\gamma(t)= \left(\int_{X}t^2\frac{c_1(L)\wedge\omega^2}{2} - \frac{c_1(L)^3}{6}\right) + \sqrt{-1}\left(\int_{X} t\frac{c_1(L)^2\wedge \omega}{2} - t^3\frac{\omega^{3}}{6}\right).
\]
In general, $\gamma(t)$ may pass through $0\in \mathbb{C}$, and in fact, one can construct examples of such behavior on the blow up of $\mathbb{P}^{3}$ in a point.  However, assuming we have a solution of the deformed Hermitian-Yang-Mills equation, we can prove that this is not the case.

\begin{prop}\label{prop: CNI}
Suppose $\alpha \in c_1(L)$ solves $\theta_{\omega}(\alpha) = \theta$ with $\theta \in(\frac{\pi}{2}, \frac{3\pi}{2})$.  Then $\gamma(t) \in \mathbb{C}^{*}$ for all $t\in[1,\infty)$.  This follows from the Chern number inequality
\[
\left(\int_{X} \omega^3 \right)\left(\int_{X} ch_3(L)\right) < 3 \left(\int_{X}ch_2(L)\wedge\omega\right)\left(\int_{X}ch_1(L)\wedge\omega^2\right)
\]
\end{prop}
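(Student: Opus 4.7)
My plan is to translate the non-vanishing of $\gamma(t)$ into an explicit Chern-number condition by expanding $\gamma(t)$ directly, pin down the unique candidate zero using the pointwise dHYM equation, and then close the strict inequality with a mix of pointwise positivity and Hodge-Riemann information.

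First, expanding $\gamma(t) = -\int_X e^{-\sqrt{-1}t\omega}\mathrm{ch}(L)$ in degrees gives
\[
\gamma(t) = \frac{3Ct^2 - A}{6} + \sqrt{-1}\,\frac{3Bt - Dt^3}{6},
\]
where $A = \int c_1(L)^3$, $B = \int c_1(L)^2\wedge\omega$, $C = \int c_1(L)\wedge\omega^2$, $D = \int\omega^3$. Any zero with $t>0$ forces $t^2 = 3B/D$ from the imaginary part and $3Ct^2 = A$ from the real part; the only candidate on $(0,\infty)$ is therefore $t_0 = \sqrt{3B/D}$, and its vanishing is equivalent to $AD = 9BC$. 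Substituting $t_0$ into the real part gives $\gamma(t_0) = (9BC - AD)/(6D)$, so the proposition reduces to the displayed strict inequality.

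Next I would diagonalize $\omega$ and $\alpha$ simultaneously at each point; writing $\lambda_i$ for the eigenvalues of $\omega^{-1}\alpha$ and $e_k$ for their elementary symmetric polynomials, the dHYM equation reads
\[
(1 - e_2) + \sqrt{-1}(e_1 - e_3) = r\,e^{\sqrt{-1}\theta}, \qquad r = \prod_i\sqrt{1+\lambda_i^2} > 0,
\]
with $\theta$ constant. The hypothesis $\theta \in (\pi/2, 3\pi/2)$ gives $\cos\theta < 0$, hence $e_2 > 1$ pointwise; integrating via $\int\alpha^2\wedge\omega = \tfrac{1}{3}\int e_2\,\omega^3$ yields $3B > D$, so $t_0 > 1$ strictly. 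Integrating the equation also produces $D - 3B = R\cos\theta$ and $3C - A = R\sin\theta$ with $R := \int_X r\,\omega^3$, from which a direct substitution gives the algebraic identity $9BC - AD = R(D\sin\theta - 3C\cos\theta)$.

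The main obstacle is that this last identity is tautological — plugging the expressions for $\sin\theta, \cos\theta$ back in merely returns $9BC - AD$ on both sides. Strict positivity must therefore be extracted from genuinely pointwise information. My approach is to decompose $[c_1(L)] = a[\omega] + [\beta]$ with $[\beta]$ primitive (so $a = C/D$), which after routine calculation rewrites
\[
9BC - AD = D\Bigl(8a^3 D + 6a\int_X\beta^2\wedge\omega - \int_X\beta^3\Bigr).
\]
Hodge-Riemann gives $\int_X\beta^2\wedge\omega \leq 0$ with equality iff $[\beta] = 0$. A Lagrangian analysis of $\sum_i\arctan\lambda_i$ on the hyperplane $\sum_i \lambda_i = 0$ shows that $\theta > \pi/2$ forces $e_1 > 0$ pointwise, and that the stronger condition $\theta > \pi$ forces all $\lambda_i > 0$ pointwise.

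In the range $\theta \in [\pi, 3\pi/2)$ the pointwise Maclaurin inequality $e_1 e_2 \geq 9 e_3$, applied with $\omega^3$ as a positive measure together with Cauchy-Schwarz, then delivers the desired strict inequality. The more delicate range $\theta \in (\pi/2, \pi)$ requires combining the pointwise sign constraint $e_1 > 0$ with a Khovanskii–Teissier or Hodge-Riemann type estimate to dominate $\int_X\beta^3$ by $8a^3 D + 6a\int_X\beta^2\wedge\omega$; this dominating step is the hardest part of the argument, and is where I would expect to invest the most work — possibly supplemented by a deformation argument in $\theta$ exploiting the fact that the sign of $9BC - AD$ cannot change without $\gamma$ having a zero, whose non-existence is precisely what we are proving.
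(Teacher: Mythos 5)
Your setup is right and matches the paper's: the reduction of non-vanishing of $\gamma$ to $9BC-AD>0$, the identification $t_0^2=3B/D$, the pointwise form $(1-e_2)+\sqrt{-1}(e_1-e_3)=re^{\sqrt{-1}\theta}$, the deduction $e_2>1$ (hence $3B>D$), and the facts that $\theta>\pi/2$ forces $e_1>0$ and $\theta>\pi$ forces $\lambda_i>0$ are all correct and are all used in the paper. But you abandon the winning move exactly when you have it in hand. Your identity $9BC-AD=R\,(D\sin\theta-3C\cos\theta)$ is not a dead end: it is tautological only if you re-substitute the \emph{integrated} expressions for $\sin\theta,\cos\theta$. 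The paper instead extracts a genuinely \emph{pointwise} inequality from the equation $\tan\theta\,(1-e_2)=e_1-e_3$: when $\theta\in(\pi,\tfrac{3\pi}{2})$ all $\lambda_i>0$, so Maclaurin gives $e_1e_2\geq 9e_3>e_3=e_1+\tan\theta\,(e_2-1)$, and dividing by $e_2-1>0$ yields $\tan\theta<e_1$ at every point. Integrating against $\omega^3$ gives $\tan\theta\cdot D<3C$, which (since $\cos\theta<0$) is exactly $D\sin\theta-3C\cos\theta>0$, and your identity finishes the proof. For $\theta\in(\tfrac{\pi}{2},\pi]$ one only needs $\tan\theta\leq 0<e_1$ and the same integration; with your identity this case is immediate from $\sin\theta\geq0$, $\cos\theta<0$, $C>0$. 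So you have the hard and easy cases reversed: the range $(\tfrac{\pi}{2},\pi)$ that you flag as ``the hardest part'' is trivial, while the range $[\pi,\tfrac{3\pi}{2})$ that you claim to dispatch is where the real content lies.

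And in that range your proposed mechanism does not close. You want $\bigl(\int e_1\bigr)\bigl(\int e_2\bigr)>\bigl(\int e_3\bigr)\bigl(\int 1\bigr)$ (all against $\omega^3$), and the pointwise bound $e_1e_2\geq 9e_3$ plus Cauchy--Schwarz only controls $\int e_1\int e_2$ from below by $\bigl(\int\sqrt{e_1e_2}\bigr)^2\geq 9\bigl(\int\sqrt{e_3}\bigr)^2$, whereas Cauchy--Schwarz bounds $\bigl(\int\sqrt{e_3}\bigr)^2$ by $\int e_3\int 1$ from \emph{above} --- the wrong direction; in general a pointwise product inequality does not transfer to a product-of-averages inequality without controlling the correlation of $e_1$ and $e_2$. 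The constancy of $\theta$ is precisely the extra structure that supplies this control, via the pointwise bound $\tan\theta<e_1$ on a single scalar quantity. The Hodge--Riemann/primitive-decomposition branch (your formula $9BC-AD=D(8a^3D+6aQ-P)$ is correct, as is $Q\leq 0$) is not known to suffice: bounding $\int_X\beta^3$ by $8a^3D+6aQ$ is not a standard Khovanskii--Teissier statement, and your deformation-in-$\theta$ fallback is circular as you note. As written, the proposal has a genuine gap in the case $\theta\in[\pi,\tfrac{3\pi}{2})$ and leaves the case $\theta\in(\tfrac{\pi}{2},\pi)$ unproved, even though both follow in a few lines from the $\tan\theta<e_1$ argument you discarded.
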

\begin{proof}
We will use the deformed Hermitian-Yang-Mills equation pointwise to prove the inequality.  Suppose first that $\theta \in (\pi, \frac{3\pi}{2})$.  Since 
\[
\Theta_{\omega}(\alpha) = \sum_{i=1}^{3} \arctan(\lambda_i) =  \theta > \pi
\]
we must have that $\alpha$ is a K\"ahler form.  Since $c_1(L)$ admits a solution of the dHYM equation, if $\gamma(t)$ passes through the origin at time $T$, we must have that $T>1$.  Solving for $T$ we have
\[
 \left(\int_{X}T^2\frac{c_1(L)\wedge\omega^2}{2} - \frac{c_1(L)^3}{6}\right) =0,
\]
and so
\[
1 < T^2 = \frac{\int_{X}c_1(L)^3}{3\int_{X}c_1(L)\wedge\omega^2}.
\]
Plugging this into the equation for ${\rm Im}(\gamma(T))=0$ we see that we must have
\[
\left(\int_{X} \omega^3\right)\left(\int_{X}c_1(L)^3\right) = 9\left( \int_{X} c_1(L)^2\wedge \omega\right) \left(\int_{X}c_1 \wedge \omega^2\right)
\]
We will show this cannot happen.  Fix a point $p\in X$, and left $\lambda = (\lambda_1, \lambda_2, \lambda_3)$ be the eigenvalues of $\alpha$ with respect to $\omega$.  We write the deformed Hermitian-Yang-Mills equation as
\[
\tan(\theta)\left( \omega^3-3\alpha^2\wedge\omega\right) =  3\alpha\wedge \omega^2-\alpha^3.
\]
Let $\sigma_1, \sigma_2, \sigma_3$ be the symmetric functions of degree $1$, $2$, and $3$ respectively.  For example;
\[
\sigma_2(\lambda) = \lambda_1\lambda_2+ \lambda_2\lambda_3+\lambda_1\lambda_3.
\]
We have
\[
\alpha^{3} = \sigma_3(\lambda) \omega^3, \quad \alpha^{2}\wedge \omega = \sigma_2(\lambda)\frac{\omega^{3}}{3}, \quad \alpha\wedge \omega^2 = \sigma_1(\lambda)\frac{\omega^{3}}{3}
\]
and so we can write the deformed Hermitian-Yang-Mills equation as
\[
\tan(\theta)(1-\sigma_2)  = \sigma_1-\sigma_3
\]
Since $\lambda_i >0$ for all $i$ we have
\[
\sigma_1 + \tan(\theta)(\sigma_2-1) < \sigma_1\sigma_2.
\]
Since $\theta \in (\frac{\pi}{2}, 3\frac{\pi}{2})$ we have $1-\sigma_2 <0$, and so we obtain
\[
\tan(\theta)<\sigma_1.
\]
Since $\theta$ is constant we integrate both sides with respect to $\omega^{3}$ to get
\[
\tan(\theta) \int_{X}\omega^{3} < 3\int_{X}\alpha \wedge \omega^2.
\]
On the other hand, by definition we have
\[
\tan(\theta) =\frac{\int_{X} \alpha^3-3\alpha\wedge \omega^2}{ \int_{X}  3\alpha^2\wedge\omega-\omega^3}.
\]
By the assumption on $\theta$ the denominator is positive, and so we can rearrange this inequality to obtain the result.  The remaining case, when $\theta \in (\frac{\pi}{2}, \pi]$ is even easier, using only that
\[
\tan(\theta) \leq 0 < \sigma_1.
\]
We leave the details to the reader.
\end{proof}

With this proposition in hand it is easy to see that $\theta(L)$ is precisely the constant appearing on the right hand side of the deformed Hermitian-Yang-Mills equation, provided a solution exists.  The main new difficulty in dimension $3$ which is not present in dimension $1$ or $2$ is to determine the algebraically the lifted angle of solutions to the deformed Hermitian-Yang-Mills equation when $Z_{\omega}(L)$ has
\[
{\rm Re}(Z_{\omega}(L)) <0, {\rm Im}(Z_{\omega}(L)) >0.
\]
The primary difficult is that solutions to dHYM with phase $\theta \in(-\frac{3\pi}{2}, -\pi] \cup [\pi, 3\frac{\pi}{2})$ are both mapped into this quadrant.  One way to distinguish these two cases is to determine whether ${\rm Re}(Z_{t\omega}(L))$ is positive or negative when ${\rm Im}(Z_{t\omega}(L)) =0$.  This is precisely what the Chern number inequality proved in Proposition~\ref{prop: CNI} accomplishes.  In arbitrary dimension this problem will be even more complicated as it will require keeping track of the signs of the real and imaginary parts of $Z_{t\omega}(L))$ and any point time where $Z_{t\omega}(L)) $ crosses the real or imaginary axes.

We note that conjectural Chern number inequalities involving $ch_3$ have appeared in the literature on Bridgeland stability conditions  \cite{BMT}.  These inequalities play a fundamental role in establishing the existence of stability conditions.  We note, however, that a counter example to the conjectural inequality in \cite{BMT} was found by Schmidt \cite{Sch}.  It would be very interesting to extend these inequalities to higher rank bundles admitting solutions of dHYM.  We end by remarking that, in this correspondence between dHYM and Bridgeland stability, the lifted angle $\theta(L)$ is not the same as the slicing angle $\varphi(L)$; instead, the two are related by a constant depending on the dimension of support of $L$.  When $L$ is a line bundle this is nothing but the dimension of $X$, but similar ideas hold for line bundles supported on proper analytic sets, which appear as torsion sheaves in $D^{b}Coh(X)$. 

We now turn to the problem of finding algebro-geometric obstructions to the existence of solutions to the deformed Hermitian-Yang-Mills equation.  Recall that, if we have a solution of the deformed Hermitian-Yang-Mills equation with lifted phase $\theta \in (n-2\frac{\pi}{2}, n\frac{\pi}{2})$, then necessarily there is an element $\chi \in c_1(L)$ such that for each $1\leq j \leq n$ we have
\[
(n-1)\frac{\pi}{2} > \sum_{i\ne j} \arctan(\mu_i) > \theta - \frac{\pi}{2}.
\]
In fact, for every subset $J \subset \{1, 2,\ldots, n \}$ with $\#J=p$ we have
\[
(n-p)\frac{\pi}{2}>\sum_{i\notin J} \arctan(\mu_i) > \theta - p\frac{\pi}{2}.
\]
Consider the form
\[
(\omega+\sqrt{-1}\chi)^{n-1}.
\]
Fix a point $p\in X$, and choose coordinates so that
\[
\omega(p) = \frac{\sqrt{-1}}{2}\sum_i dz_i\wedge d\bar{z}_i \qquad \chi(p) = \frac{\sqrt{-1}}{2}\sum_{i}\mu_i dz_i \wedge d\bar{z}_i.
\]
we have
\[
(\omega+\sqrt{-1}\chi)^{n-1} = \sum_j r_j e^{\sqrt{-1} \sum_{i\ne j} \arctan(\mu_j)} \widehat{dz_j \wedge d\bar{z}_j}
\]
where $r_j >0$, and
\[
\widehat{dz_j \wedge d\bar{z}_j} = (\sqrt{-1})^{n-1} dz_1 \wedge d\bar{z}_1 \cdots\widehat{ dz_j \wedge d\bar{z}_j} \cdots dz_n \wedge d\bar{z}_n.
\]
Consider the real $(n-1,n-1)$ form given by
\[
\begin{aligned}
{\rm Im}\left( e^{-\sqrt{-1}\left(\theta -\frac{\pi}{2}\right)}(\omega+\sqrt{-1}\chi)^{n-1}\right) &=\sum_j r_j  {\rm Im}\left( e^{\sqrt{-1}\left(\sum_{i\ne j} \arctan(\mu_j) -(\theta -\frac{\pi}{2})\right)}\right) \widehat{dz_j \wedge d\bar{z}_j}
\end{aligned}
\]
By assumption we have for each $1\leq j \leq n$
\[
0< \sum_{i\ne j} \arctan(\mu_j) -(\theta -\frac{\pi}{2})< \pi
\]
and so
\[
{\rm Im}\left( e^{-\sqrt{-1}\left(\theta -\frac{\pi}{2}\right)}(\omega+\sqrt{-1}\chi)^{n-1}\right) >0
\]
in the sense of $(n-1,n-1)$ forms.  In particular, if $V \subset X$ is a irreducible analytic subvariety with $\dim_{\mathbb{C}} V=n-1$, then we must have
\[
{\rm Im}\left(\int_{V} e^{-\sqrt{-1}\left(\theta -\frac{\pi}{2}\right)}(\omega+\sqrt{-1}\chi)^{n-1}\right) >0.
\]
Similar inequalities hold for irreducible analytic subvarieties of all dimension, with the same proof. 
\begin{prop}
Suppose $c_1(L)$ admits a solution of the deformed Hermitian-Yang-Mills equation with $\theta \in ((n-2)\frac{\pi}{2}, n\frac{\pi}{2})$.  Then for every irreducible analytic subvariety $V\subset X$ of dimension $1\leq p <n$ we have
\begin{equation}\label{eq: obstr}
{\rm Im}\left(\int_{V} e^{-\sqrt{-1}\left(\theta -(n-p)\frac{\pi}{2}\right)}(\omega+\sqrt{-1}\chi)^{p}\right) >0.
\end{equation}
\end{prop}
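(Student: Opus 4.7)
The plan is to promote the pointwise argument just carried out for $p=n-1$ to all $1\le p<n$. At an arbitrary point $x\in X$ I would choose holomorphic coordinates simultaneously diagonalizing $\omega$ and $\chi$ with eigenvalues $\mu_1,\dots,\mu_n$, and expand
\[
(\omega+\sqrt{-1}\chi)^p \;=\; \sum_{|J|=n-p} R_J\, e^{\sqrt{-1}\Sigma_J}\,\widehat{dz_J\wedge d\bar z_J},
\]
where $R_J>0$, $\Sigma_J:=\sum_{i\notin J}\arctan(\mu_i)$ is a sum of exactly $p$ arctangents, and $\widehat{dz_J\wedge d\bar z_J}$ denotes the strongly positive $(p,p)$-form obtained by wedging $dz_i\wedge d\bar z_i$ over the $p$ indices $i\notin J$ together with the appropriate power of $\sqrt{-1}$.

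Multiplying by $e^{-\sqrt{-1}(\theta-(n-p)\pi/2)}$ and taking imaginary parts, the coefficient of $\widehat{dz_J\wedge d\bar z_J}$ becomes $R_J\sin(\Sigma_J-\theta+(n-p)\pi/2)$, so the entire matter reduces to verifying
\[
0 \;<\; \Sigma_J-\theta+(n-p)\tfrac{\pi}{2} \;<\; \pi
\]
for every $J$ with $|J|=n-p$. The lower bound is exactly the pointwise inequality on subsets of cardinality $n-p$ recalled just above the statement of the proposition. For the upper bound, only the trivial estimate $\Sigma_J<p\cdot\pi/2$ is needed: combined with the standing hypothesis $\theta>(n-2)\pi/2$ it gives
\[
\Sigma_J-\theta+(n-p)\tfrac{\pi}{2} \;<\; \tfrac{n\pi}{2}-\theta \;<\; \pi.
\]

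Once this pointwise positivity is in hand, ${\rm Im}\bigl(e^{-\sqrt{-1}(\theta-(n-p)\pi/2)}(\omega+\sqrt{-1}\chi)^p\bigr)$ is a positive combination of strongly positive frame elements, hence strongly positive; moreover its restriction to any complex $p$-plane is strictly positive, since at least one term $\widehat{dz_J\wedge d\bar z_J}$ restricts nontrivially. Integrating over the smooth locus $V^{\rm reg}$ of the irreducible analytic subvariety $V$ (which has full measure in $V$) thus yields a strictly positive number, and this integral agrees with $\int_V$ by standard facts on integration of forms on analytic sets (alternatively, by pulling back to a resolution $\tilde V\to V$ of singularities).

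The only genuine obstacle is the angular bookkeeping in the second step: the hypothesis $\theta>(n-2)\pi/2$ is precisely what makes the upper bound $n\pi/2-\theta<\pi$ work, while the lower bound is provided by the pointwise subsolution inequality for subsets of size $n-p$. Everything else is direct linear algebra together with the standard fact that strongly positive $(p,p)$-forms integrate positively over $p$-dimensional analytic subvarieties, so no new analytic or curvature input is required beyond the expansion already used in the $p=n-1$ case.
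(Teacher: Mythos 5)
Your proposal is correct and follows essentially the same route as the paper: the paper carries out exactly this decomposition and angular estimate in the case $p=n-1$ (using the pointwise inequality $\theta-(n-p)\tfrac{\pi}{2}<\sum_{i\notin J}\arctan(\mu_i)<p\tfrac{\pi}{2}$ for the lower and upper bounds) and then asserts the general case holds ``with the same proof,'' which is precisely what you have written out.
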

In terms of the central charge we can write this in the following way.  Define
\[
Z_{\omega,V}(L) = -\int_{V}e^{-\sqrt{-1}\omega}ch(L)
\]
then we must have
\begin{equation}\label{eq: phaseIneq}
{\rm Im}\left(\frac{Z_{\omega, V}(L)}{Z_{\omega, X}(L)}\right) >0.
\end{equation}
To relate this to the Bridgeland stability condition we would like to think of inequality~\eqref{eq: phaseIneq} as saying that the surjection
\[
L \twoheadrightarrow L \otimes \mathcal{O}_{V}
\]
does not destabilize $L$, where $\mathcal{O}_{V}$ is the skyscraper sheaf with support on $V$.  Unfortunately this is not quite correct (unless $Td(X)=1$), since 
\[
Z_{\omega,V}(L) \ne Z_{\omega, X}(L\otimes \mathcal{O}_{V}).
\]   
Finally we note that if $L$ admits a solution of the deformed Hermitian-Yang-Mills equation then by the BPS bound in Proposition~\ref{prop: BPS} we have
\[
\frac{ |Z_{\omega, X}(L)|}{\|ch(L)\|} >0
\]
which is precisely the second condition required in the definition of a Bridgeland stability condition.

\begin{conj}[Collins-Jacob-Yau \cite{CJY}]\label{conj: CJY}
There exists a solution to the deformed Hermitian-Yang-Mills equation in the class $\fa$ with lifted angle $\theta \in(n-2)\frac{\pi}{2}, n\frac{\pi}{2})$ if and only if~\eqref{eq: obstr} holds for all proper, irreducible analytic subvarieties $V\subsetneq X$ with $\dim_{\mathbb{C}}V= p$.
\end{conj}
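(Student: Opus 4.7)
\bigskip

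\noindent\textbf{Proof plan for Conjecture \ref{conj: CJY}.}

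The necessity direction is already established in the discussion preceding the conjecture, so the plan is to attack sufficiency. The overall strategy is the continuity method: deform from a reference equation that is trivially solvable to the desired equation, and obtain a priori $C^{\infty}$ estimates along the path. Specifically, given a reference $\alpha_0 \in \fa$ with large rescaling so that $k\alpha_0$ satisfies the pointwise hypothesis \eqref{eq: subSol} at $t=1$, I would interpolate the target angle (or equivalently the class) via a one-parameter family $\fa_{t}$, starting with an ample-type class where the Collins-Jacob-Yau theorem applies, and ending at $\fa$. Openness of the set of solvable parameters follows from the implicit function theorem, since the linearization of $\Theta_{\omega}$ is uniformly elliptic. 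The entire game is therefore to prove closedness.

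By the theorem of Collins-Jacob-Yau recalled in the excerpt, closedness reduces to two ingredients at each time $t$: a $C^{0}$ bound on the potential $\phi_{t}$, and the existence of a $C$-subsolution, i.e.\ a representative $\chi_{t} \in \fa_{t}$ whose eigenvalues satisfy \eqref{eq: subSol} for the corresponding lifted angle. The $C^{0}$ estimate I would approach by pluripotential methods adapted from Kolodziej and B\l ocki: test $\Theta_{\omega}(\alpha_{\phi}) = \theta$ against suitable capacity-type quantities, using the concavity properties of $\sum\arctan\lambda_{i}$ on the cone where the subsolution condition holds. An ALE/auxiliary Monge-Amp\`ere comparison seems natural, in the spirit of the $C^{0}$ estimates for the Hessian equation and the $J$-equation. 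This is technical but expected to be adaptable.

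The real heart of the problem is constructing the subsolution $\chi_{t}$ from the numerical data \eqref{eq: obstr}. The pointwise subsolution condition is equivalent to the positivity of the real $(n-1,n-1)$-form
\[
{\rm Im}\bigl(e^{-\sqrt{-1}(\theta - \frac{\pi}{2})}(\omega + \sqrt{-1}\chi)^{n-1}\bigr) > 0,
\]
so what is needed is a representative of $\fa$ rendering this form strictly positive. This is a Demailly-P\u{a}un-type problem, but one step up in codimension: we are asking for positivity of an $(n-1,n-1)$-class built nonlinearly from an $(1,1)$-class. I would attempt the contrapositive via a mass-concentration argument: suppose no such $\chi$ exists; then a suitable minimizing or Monge-Amp\`ere regularized sequence of candidate forms must develop a positive closed current $T$ supported on a proper analytic set (by a Siu-type decomposition of its Lelong sublevel sets). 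Intersecting $T$ with powers of $\omega + \sqrt{-1}\fa$ and matching real and imaginary parts, one should extract an irreducible component $V$ of intermediate dimension $p$ on which \eqref{eq: obstr} is violated, giving a contradiction.

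The main obstacle I anticipate is precisely this last step: Demailly-P\u{a}un's original argument uses the Calabi-Yau theorem and a delicate mass concentration, and transplanting it to the nonlinear $(n-1,n-1)$-positivity arising from dHYM requires handling the full chain of inequalities for all $p < n$ simultaneously and controlling the interaction between the $\arctan$ nonlinearity and the concentration of currents. A secondary obstacle is that the lifted angle itself may jump along the continuity path if $\gamma(t)$ approaches the origin; Proposition \ref{prop: CNI} handles $n=3$, and one would need higher-dimensional Chern-number inequalities (the $n\ge 4$ analogue of Proposition~\ref{prop: CNI}) to keep the target angle continuous as $t$ varies. This makes me expect the conjecture to be accessible first in low dimensions and in the "supercritical" range $\theta > (n-2+\tfrac{2}{n})\tfrac{\pi}{2}$, before the full statement.
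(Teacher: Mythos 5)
The statement you are asked to prove is stated in the paper as a \emph{conjecture}: the authors give no proof of sufficiency, only the necessity direction (which you correctly set aside) and some evidence (the case $n=2$ and the toric case of the asymptotic $J$-equation version). So there is no ``paper proof'' to match your argument against, and your proposal should be judged on its own as an attempted resolution of an open problem. Judged that way, it is a sensible roadmap --- continuity method, openness by ellipticity, closedness reduced to a $C^{0}$ bound plus a subsolution, and the subsolution extracted from the numerical inequalities by a Demailly-P\u{a}un/mass-concentration scheme --- but it is a plan, not a proof. The two steps you yourself flag as the ``heart'' and the ``main obstacle'' are exactly the content of the conjecture, and you do not carry either out: (i) producing a representative $\chi \in \fa$ with ${\rm Im}\bigl(e^{-\sqrt{-1}(\theta-\frac{\pi}{2})}(\omega+\sqrt{-1}\chi)^{n-1}\bigr)>0$ from the integral inequalities~\eqref{eq: obstr} is a genuinely nonlinear positivity problem for which no Siu-decomposition or concentration argument is actually exhibited, and the interaction of the $\arctan$ nonlinearity with Lelong numbers is precisely where all known attempts get stuck; (ii) the $C^{0}$ estimate ``adapted from Kolodziej/B\l ocki'' is asserted, not proved, and outside the supercritical range the operator loses the concavity your sketch relies on.

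Two further concrete issues. First, your continuity path deforms the class $\fa_{t}$, but the lifted angle $\theta_{t}$ is only known to be well-defined when some representative has oscillation of $\Theta_{\omega}$ less than $\pi$ (Lemma~\ref{lem: liftAng}); along a general path you cannot guarantee this, and controlling $\theta_{t}$ cohomologically is exactly the problem Proposition~\ref{prop: CNI} only settles for $n=3$ \emph{assuming a solution already exists} --- so invoking it along the path is circular. Second, the inequality~\eqref{eq: obstr} as stated is not uniform in $V$, whereas any compactness/closedness argument of the type you describe will require a uniform gap (compare the constant $C>0$ in the definition of a Bridgeland stability condition); this is a substantive issue with the formulation, not a technicality. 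In short: the necessity half is fine and agrees with the paper, but the sufficiency half remains open after your proposal, for the reasons you partly acknowledge.
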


We end by noting some evidence for the conjecture.  First of all, the conjecture holds in complex dimension 2 \cite{JY, CJY}.  In general, we consider an asymptotic version of the conjecture.  Suppose that $L$ is an ample line bundle.  We ask the following question

\begin{que}
When can we find a metric $h$ on $L$, such that the induced metric $h^k$ on $L^k$ solves the deformed Hermtian-Yang-Mills equation for $k \gg 1$?
\end{que}
Replacing $\alpha \mapsto k\alpha$ we see that, up to rescaling, the deformed Hermitian-Yang-Mills equation on $L^k$ is 
\[
c\alpha^{n} = n \alpha^{n-1}\wedge \omega + O(k^{-1})
\]
where the constant $c$ is determined by
\[
c = \frac{n \int_{X}\alpha^{n-1}\wedge \omega}{\int_{X}\alpha^{n}}.
\]
This equation has a long history in K\"ahler geometry.  It was discovered independently by Donaldson \cite{Don} and Chen \cite{Chen04}, and was studied from the analytic point of view by Weinkove \cite{W1,W2} and Song-Weinkove \cite{SW3}.  Let us consider the asymptotic version of Conjecture~\ref{conj: CJY}.  First, we observe that
\[
\begin{aligned}
ch(L^{k}) &= \sum_{p=1}^{n} k^{p}\frac{c_1(L)^p}{p!}\\
ch(\mathcal{O}_{V}) &= (-1)^{d-1}(d-1)![V] + \text{ {\em higher degree} }
\end{aligned}
\]
and therefore
\[
\begin{aligned}
\int_{X}e^{-\sqrt{-1}\omega}ch(L^{k}\otimes \mathcal{O}_{V}) &=(-1)^{d-1}\frac{k^{d}}{d} \int_{V}c_1(L)^d\\
&\quad -\sqrt{-1} (-1)^{d-1}k^{d-1}\int_{V}c_1(L)^{d-1}\wedge \omega + \text{ {\em lower order} }.
\end{aligned}
\]
Note that this agrees with the leading order term in the expansion of $Z_{V,\omega}(L^k)$.  We obtain
\[
{\rm Arg}_{p.v.}Z_{X,\omega}(L^k \otimes \mathcal{O}_{V}) = \arctan\left( - \frac{1}{k} \frac{d\int_{V}c_1(L)^{d-1}\wedge \omega}{ \int_{V}c_1(L)^d} + \text{ {\em lower order }} \right).
\]
The formal limit of Conjecture~\ref{conj: CJY} is therefore
\begin{conj}[Lejmi-Sz\'ekelyhidi \cite{LS}]\label{conj: LS}
Let $L$ be an ample line bundle.  There exists a K\"ahler metric in $c_1(L)$ solving the $J$-equation if and only if 
\[
 \frac{d\int_{V}c_1(L)^{d-1}\wedge \omega}{ \int_{V}c_1(L)^d} <  \frac{n\int_{X}c_1(L)^{n-1}\wedge \omega}{ \int_{X}c_1(L)^n}
 \]
 for all irreducible analytic subvarieties $V\subsetneq X$ with $\dim_{\mathbb{C}}V=d$.
 \end{conj}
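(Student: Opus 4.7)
The plan is to prove the two directions of Conjecture~\ref{conj: LS} separately. For the necessity direction, I would apply a pointwise positivity argument entirely analogous to the one preceding inequality~\eqref{eq: obstr}. If $\chi \in c_1(L)$ is K\"ahler and satisfies $c\chi^n = n\chi^{n-1}\wedge \omega$, then at every point the eigenvalues $\mu_i$ of $\chi$ relative to $\omega$ satisfy $\sum_i \mu_i^{-1} = c$, and in particular for any proper subset $J \subset \{1,\dots,n\}$ of cardinality $d$, $\sum_{i\in J}\mu_i^{-1} < c$ (since all $\mu_i>0$). Translating this eigenvalue inequality into form-level positivity at each point shows that $\bigl(c\,\chi - d\,\omega\bigr)\wedge \chi^{d-1}$ restricts positively to every $d$-dimensional complex subspace of the tangent space. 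Pairing with the current of integration $[V]$ for an irreducible $d$-dimensional $V \subsetneq X$ and integrating then yields the numerical inequality claimed.

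For the sufficiency direction, the plan is to combine two inputs. The first is the Song--Weinkove existence theorem, which produces a smooth solution of the $J$-equation in $c_1(L)$ whenever one can exhibit a \emph{cone subsolution}: a K\"ahler $\chi_0 \in c_1(L)$ whose eigenvalues $\nu_i$ with respect to $\omega$ satisfy the strict pointwise inequality $\sum_{i\ne j}\nu_i^{-1} < c$ for every $j$. Given such a $\chi_0$, a standard continuity method (deforming $c$ or adding a lower-order term) combined with the subsolution-based a priori $C^0$ and $C^2$ estimates closes the argument. So the problem reduces to producing a pointwise cone subsolution from the purely numerical hypotheses of the conjecture.

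The hard step, and the main obstacle, is precisely this translation from numerical to pointwise positivity: it is a nonlinear analog of the Demailly--P\u{a}un characterization of the K\"ahler cone, or of the Nakai--Moishezon criterion. I would argue by contradiction. Assume every proper irreducible subvariety satisfies the numerical inequality, but no cone subsolution exists in $c_1(L)$. The failure of the cone condition along a maximizing sequence should, after normalization, produce a closed positive $(n-1,n-1)$ current $T$ on $X$ whose cohomology class is a positive combination of $\omega^{n-1}$ and $c_1(L)^{n-1}$ and which "saturates" the failed inequality. Applying Siu's decomposition theorem to $T$ extracts a divisorial component, and iterating the argument in lower dimensions (working with the restriction of the problem to this divisor) yields, via descent on dimension, an irreducible analytic subvariety $V$ for which the conjectured numerical inequality is violated, contradicting the hypothesis. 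Making this concentration/descent argument precise --- in particular, ensuring that the limiting current is supported on a genuine analytic subvariety and that the induced inequality on $V$ is the one appearing in Conjecture~\ref{conj: LS} rather than a weaker form --- is the principal technical difficulty, requiring delicate pluripotential theory and positivity arguments reminiscent of Boucksom--Demailly--P\u{a}un--Peternell.
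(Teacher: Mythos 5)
First, a point of order: the statement you are proving is stated in the paper as a \emph{conjecture}, not a theorem. The paper offers no proof of it; the only evidence recorded is the formal derivation of Conjecture~\ref{conj: LS} as the large-volume limit of Conjecture~\ref{conj: CJY}, together with the theorem of Collins--Sz\'ekelyhidi that the conjecture holds when $X$ is toric. So there is no ``paper's own proof'' to compare against, and a complete proof of the general statement would be a new result, not a reconstruction.

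Your necessity direction is essentially correct and standard: it is the same pointwise positivity argument the paper carries out for the dHYM obstruction~\eqref{eq: obstr}, transplanted to the $J$-equation, and it goes back to Lejmi--Sz\'ekelyhidi. The genuine gap is in the sufficiency direction, and it sits exactly where you flag ``the principal technical difficulty.'' Two concrete problems. (i) The concentration/descent step as described would fail: Siu's decomposition theorem applies to closed positive $(1,1)$-currents, whose Lelong-level sets are analytic by Siu's theorem; there is no analogous structure theorem for closed positive $(p,p)$-currents with $p>1$, and such currents need not be supported on, or decomposable along, analytic subvarieties. A limit current of bidegree $(n-1,n-1)$ extracted from a maximizing sequence therefore cannot be fed into Siu decomposition to produce a divisor. (ii) Even the Demailly--P\u{a}un mass-concentration scheme you invoke depends on the very particular structure of the complex Monge--Amp\`ere equation (Yau's theorem applied to a degenerating family, plus the $(1,1)$ regularization machinery); the $J$-equation subsolution condition is a constraint on $\sum_{i\in J}\mu_i^{-1}$ over all index sets $J$, not a single determinant equation, and no one has succeeded in running that scheme here. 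In short, the reduction to a cone subsolution via Song--Weinkove is correct, but the passage from the numerical inequalities on all subvarieties to the existence of a pointwise cone subsolution is the entire open content of the conjecture, and your sketch does not supply it. (For context: this step was eventually carried out, years after this paper, by G.~Chen under a uniform version of the inequality, by a method quite different from the one you outline.)
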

In fact, this conjecture was discovered from a very different point of view than the one discuss here.  The work of Lejmi-Sz\'ekelyhidi \cite{LS} is based on an extension of $K$-stability, which plays an important role in the existence of constant scalar curvature K\"ahler metrics \cite{Don02, Don05}. We have
\begin{thm}[Collins-Sz\'ekelyhidi \cite{CS}]
Conjecture~\ref{conj: LS} is true when $X$ is toric.
\end{thm}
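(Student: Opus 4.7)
The plan is to prove the sufficient direction of Conjecture~\ref{conj: LS} on a toric variety by reducing the $J$-equation to a real fully nonlinear PDE on the moment polytope $P$, running a continuity method, and using the dictionary between faces of $P$ and torus-invariant subvarieties of $X$ to translate any failure of the a priori estimates into a violation of the slope inequality. The first step is equivariant reduction: since the $J$-equation has a convex Donaldson-Chen functional and unique solutions modulo constants, torus-averaging shows it suffices to find a torus-invariant solution. Torus-invariant K\"ahler metrics in $c_1(L)$ correspond via Abreu's symplectic picture to strictly convex symplectic potentials $u$ on $\mathrm{int}(P)$ with Guillemin boundary behavior on $\partial P$, and in these coordinates the $J$-equation becomes a concave fully nonlinear elliptic PDE for $u$ in which the reference form $\omega$ enters only through a fixed smooth positive function on $P$.

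The second step is to set up a continuity path $[\omega_t]$ of K\"ahler classes, trivially solvable at $t=0$ and recovering the desired equation at $t=1$. Openness follows from ellipticity of the linearization and self-adjointness on mean-zero functions; the entire content of the theorem then lies in the closedness step, which reduces to a priori $C^0$ and higher-order estimates on the normalized symplectic potential $u_t$. Higher-order interior estimates will follow from Evans-Krylov and Schauder theory applied to the concave operator, and boundary estimates compatible with the Guillemin edge singularities from Donaldson's toric boundary regularity theory, so the heart of the matter is the $C^0$ estimate.

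This $C^0$ estimate is the main obstacle and is where the slope inequality enters. Adapting Donaldson's strategy from the constant scalar curvature K\"ahler problem on toric surfaces, I would argue by contradiction: if $\sup_P u_{t_k} \to \infty$ along a sequence $t_k \to t_* \leq 1$, then after normalization a subsequence converges to a nontrivial convex limit $u_\infty$ on $P$ whose singular locus concentrates along a proper face $F \subsetneq P$. Testing the equation against $u_\infty$ and integrating by parts on $P$, concavity of the operator produces a nonnegative linear functional on convex functions that vanishes on affine functions and evaluates strictly negatively on the characteristic test function associated to $F$. Via the toric face-subvariety dictionary, $F$ corresponds to a proper irreducible torus-invariant subvariety $V_F \subsetneq X$ of dimension $\dim F$, and an explicit calculation identifies the resulting sign condition as precisely the failure of the Lejmi-Sz\'ekelyhidi slope inequality for $V_F$, contradicting the hypothesis. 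The restriction to torus-invariant test subvarieties in the sufficient direction is harmless because the degeneration produced by the continuity method is itself equivariant; this is the essential simplification afforded by the toric hypothesis, and it is what ultimately reduces the general algebro-geometric condition of Conjecture~\ref{conj: LS} to one checkable on the combinatorial data of $P$.
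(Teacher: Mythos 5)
Your proposal is a program in the style of Donaldson's work on the toric cscK problem, and the step that carries all the weight---the $C^0$ estimate via a blow-up argument producing a convex limit $u_\infty$ concentrated on a face $F$, followed by ``testing the equation against $u_\infty$'' to extract the slope inequality for $V_F$---is asserted rather than proved. No structure theory for degenerating families of solutions of the $J$-equation on polytopes exists in the literature; even in the cscK case on toric surfaces this is the content of several long papers, and you give no reason why the singular set of $u_\infty$ should concentrate on a single face or why the resulting functional is the Lejmi--Sz\'ekelyhidi slope functional. There is also a concrete error earlier in the reduction: since $\omega$ and the unknown form lie in different K\"ahler classes, you cannot put $\omega$ into the action-angle coordinates of the unknown symplectic potential $u$; its coefficients in those coordinates depend on the point of the complex torus, i.e.\ on $\nabla u$, so the $J$-equation $\mathrm{tr}_{\chi}\omega = c$ does \emph{not} become an equation ``in which $\omega$ enters only through a fixed smooth positive function on $P$.'' The equation on the polytope is a full trace of $(D^2u)^{-1}$ against a matrix depending on the unknown, and the claimed concavity and Evans--Krylov/boundary regularity package does not come for free.

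The actual proof in \cite{CS} takes a different and much more economical route that sidesteps all of the above. The analytic input is the theorem of Song--Weinkove \cite{SW3} (building on \cite{W1,W2}): the $J$-flow converges to a critical metric if and only if there exists $\chi'\in c_1(L)$ satisfying the pointwise cone condition $nc\,\chi'^{\,n-1}-(n-1)\chi'^{\,n-2}\wedge\omega>0$ as an $(n-1,n-1)$-form. The entire content of \cite{CS} is therefore the construction of such a subsolution from the intersection-number inequalities of Conjecture~\ref{conj: LS} on \emph{torus-invariant} subvarieties. This is done by induction on dimension over the toric strata: the inductive hypothesis furnishes subsolutions of the corresponding lower-dimensional cone conditions on each torus-invariant divisor, these are extended to neighborhoods and glued---using a regularized maximum together with the convexity of the set of potentials satisfying the cone condition---with a background form that works away from the invariant divisors. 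This explains, for a different reason than the one you give, why only torus-invariant test subvarieties are needed; combined with the elementary necessity direction (already in \cite{LS}) it yields the theorem. If you wish to pursue your continuity-method program instead, the missing ingredient you must supply is the compactness/structure theory for the degenerating potentials, which is precisely what the subsolution reduction avoids.
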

\bigskip
\noindent {\bf Acknowledgements}: The authors are grateful to Adam Jacob for some helpful comments on an early draft of this paper.

\end{document}